\DeclareMathOperator{\Supp}{Supp}
\DeclareMathOperator{\Bir}{Bir}
 \numberwithin{equation}{subsection}
 \numberwithin{footnote}{subsection}
 \newtheorem{lem}[subsection]{Lemma}
 \newtheorem{prop}[subsection]{Proposition}
 \newtheorem{thm}[subsection]{Theorem}
 \newtheorem{conj}[subsection]{Conjecture}
\theoremstyle{upright}
 \newtheorem{defn}[subsection]{Definition}
 \newtheorem{rem}[subsection]{Remark}
 \newcommand{\N}{\mathbb N}
 \newcommand{\Q}{\mathbb Q}
 \newcommand{\R}{\mathbb R}
  \newcommand{\Diff}{\text{Diff}}
 \newcommand{\rddown}[1]{\left\lfloor{#1}\right\rfloor} 
\title{Some Results about the Index Conjecture for log Calabi-Yau Pairs}
\thanks{2010 MSC:
	14J32, 
	14J45, 
14E30, 
14C20, 
}
\author{Yanning Xu}
\date{\today}
\begin{document}
\maketitle
\begin{abstract}
	We show some inductive statements for the index conjecture for log canonical Calabi-Yau pairs. Using it, we show that boundedness of log canonical index for log canonical Calabi Yau pairs with rational DCC coefficients in dimension 3. We also show boundedness of log canonical index for klt log Calabi-Yau pairs in dimension 4 with $B\neq 0$. 
\end{abstract}

\tableofcontents
\newpage


\section{Introduction}
We investigate the following conjecture, known as the index conjecture for log Calabi-Yau pairs, in this paper. 
\begin{conj}\label{conj-bound-dlt-index}
	Let $d$ be a natural number and $\mathfrak{R}\subset [0,1]$ be a finite set of rational numbers.
	Then there exists a natural number $n$ 
	depending only on $d$ and $\mathfrak{R}$ satisfying the following.  
	Assume $(X,B)$ is a projective pair such that 
	\begin{itemize}
		
		\item $(X,B)$ is lc of dimension $d$,
		
		\item $B\in \Phi(\mathfrak{R})$, that is, the coefficients of $B$ are in $\Phi(\mathfrak{R})$, and 
		
		\item $K_X+B\sim_\Q 0$
	\end{itemize}
	Then  $n(K_X+B)\sim 0$.
	
\end{conj}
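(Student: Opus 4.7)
The plan is to prove the conjecture by induction on $d$, using adjunction to lc centres and the canonical bundle formula as the two main reduction tools. First I would reduce to the case that $(X,B)$ is $\Q$-factorial dlt: a dlt modification $(Y,B_Y)\to(X,B)$ preserves $\sim_{\Q}$-triviality and keeps the coefficients of $B_Y$ in $\Phi(\mathfrak{R})$ (newly extracted exceptional divisors appear with coefficient $1$), and an index bound upstairs descends to $X$ by pushforward since the exceptional divisors disappear.

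After this reduction I would split according to whether $(Y,B_Y)$ has a non-klt centre. If yes, pick a minimal lc centre $S\subset Y$; by Koll\'ar--Shokurov adjunction one obtains $(K_Y+B_Y)|_S \sim_{\Q} K_S+B_S$ with $(S,B_S)$ lc of dimension $<d$, and by ACC for hyperstandard coefficients the boundary $B_S$ lies in $\Phi(\mathfrak{R}')$ for a finite rational $\mathfrak{R}'$ depending only on $(d,\mathfrak{R})$. Induction then gives an integer $m=m(d,\mathfrak{R})$ with $m(K_S+B_S)\sim 0$. The key step is to lift this linear equivalence to $Y$: since $K_Y+B_Y\sim_{\Q} 0$, a Koll\'ar-type injectivity/vanishing argument applied to the ideal sheaf of the union of lc centres should show, after replacing $m$ by a bounded multiple $N$, that the restriction map $H^0\bigl(Y,N(K_Y+B_Y)\bigr)\to H^0\bigl(S,N(K_S+B_S)\bigr)$ is surjective, and a lift of the trivialising section yields $N(K_Y+B_Y)\sim 0$.

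If $(Y,B_Y)$ is klt, I would run a $K_Y$-MMP to reduce either to a Mori fibre space $Y\to Z$ with $\dim Z<d$, or to a situation where $Y$ is of Fano type. In the fibration case I apply the canonical bundle formula $K_Y+B_Y\sim_{\Q} f^{\ast}(K_Z+B_Z+M_Z)$: by work of Ambro, Prokhorov--Shokurov, and Filipazzi--Floris the discriminant $B_Z$ has coefficients in a set $\Phi(\mathfrak{R}'')$ depending only on $(d,\mathfrak{R})$ and, in the cases where it is available, the moduli part $M_Z$ has Cartier index bounded in terms of $(d,\mathfrak{R})$; the inductive hypothesis applied to $(Z,B_Z+M_Z)$ then gives the required integer. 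In the Fano-type case, Birkar's theorem on boundedness of complements directly bounds the index of $K_Y+B_Y$.

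The hard part will be twofold. First, the lifting step in the strictly lc case: existing vanishing theorems give divisibility of the index but extracting a lifting constant bounded purely in $(d,\mathfrak{R})$ seems to require combining the vanishing with BAB-type boundedness of the minimal lc centre $S$, so that the Cartier index of the restricted divisor is controlled independently of $Y$. Second, in the klt fibration case, bounding the Cartier index of the moduli $b$-divisor $M_Z$ is essentially the $B$-birational boundedness conjecture for klt-trivial fibrations, which is known only in low relative dimensions; this appears to be the bottleneck that restricts unconditional statements of the type proved in this paper to dimensions $3$ and $4$.
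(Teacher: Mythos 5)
You should first note that the statement you were asked to prove is, in this paper, a \emph{conjecture}: the paper never proves Conjecture \ref{conj-bound-dlt-index} in full generality. It proves conditional implications (Theorems \ref{thm-lc-index-bounded}, \ref{thm-klt-index-induction}, \ref{thm-1.9}, \ref{thm-induction-index}) and unconditional statements only in dimension $3$ (Theorem \ref{thm-lc-index-dim-3-4}) and partially in dimension $4$ (Theorem \ref{thm-index-4}). Measured against that, your proposal is an outline rather than a proof, and the two ``hard parts'' you flag are exactly the points that remain open; you cannot wave them through. Concretely: (1) in the strictly lc case, the lifting of the trivialising section from a minimal lc centre $S$ is not a matter of a ``Koll\'ar-type injectivity/vanishing argument applied to the ideal sheaf'' --- for lc pairs the relevant surjectivity is not available in that form, and the known route (Fujino, Fujino--Gongyo) glues sections on the sdlt locus $\rddown{B}$ via B-pluricanonical representations; turning finiteness of $\rho_m(\Bir)$ into a bound depending only on $(d,\mathfrak{R})$ is precisely Conjecture \ref{conj_bounded_bir_auto}, which the paper states as open for $d\ge 2$. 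Also, for a minimal lc centre of codimension $\ge 2$ the coefficients of $B_S$ are governed by Koll\'ar--Shokurov adjunction \emph{with a moduli part}, so the claim $B_S\in\Phi(\mathfrak{R}')$ needs more care than you give it. (2) In the klt case, bounding the Cartier index of the moduli part, and more fundamentally the absolute case $B=0$, $K_X\sim_\Q 0$ (Conjecture \ref{conj-absolute-index}), is open in dimension $\ge 4$; no MMP reduction in your sketch avoids it. So the proposal has genuine gaps and does not constitute a proof.

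For comparison, the paper's (conditional) route differs from yours in instructive ways. In the non-klt case it does not restrict to a minimal lc centre at all: it runs an MMP on $K_X+B-\epsilon\rddown{B}$ to reach a Mori fibre space $X\to V$, and then either uses boundedness of complements (Theorem \ref{t-bnd-compl}) when $V$ is a point, the canonical bundle formula with effective b-semiampleness over a $(d-1)$-dimensional base, or divisorial adjunction to a \emph{horizontal} component $S\subseteq\rddown{B}$, which is only slc (Lemma \ref{lem-lt-slc-adjunction}); the induction is therefore on the \emph{slc} statement (Conjecture \ref{conj-bound-sdlt-index}) in lower dimension, and the trivialisation is descended from $S$ to $V$ through the fibration rather than lifted from a centre to $X$. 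This is how the lifting problem you identified gets repackaged: it resurfaces as the passage from the lc to the slc statement (Theorem \ref{thm-1.9}), which is exactly where Conjecture \ref{conj_bounded_bir_auto} is needed. In the klt case the paper uses the Di Cerbo--Svaldi tower of Mori fibre spaces (Theorem \ref{thm-Roberto-CY-tower}), Birkar's boundedness of Fano-type/log Calabi--Yau fibrations, and Floris's result to kill numerically trivial moduli parts, reducing everything to the absolute Conjecture \ref{conj-absolute-index}; your suggestion to bound the Cartier index of $M_Z$ directly is stronger than what is needed or known. If you want to salvage your write-up, recast it as a conditional statement with the two obstructions isolated as explicit hypotheses, which is in effect what Theorem \ref{thm-induction-index} does.
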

\begin{rem}
	By \cite{HMX}, it suffices to prove the above conjecture when coefficients of $B$ belong to $\mathfrak{R}$, a finite set of rationals.
\end{rem}
The above conjecture has a natural generalisation to the so called semi-log canonical version.
\begin{conj}\label{conj-bound-sdlt-index}
	Let $d$ be a natural number and $\mathfrak{R}\subset [0,1]$ be a finite set of rational numbers.
	Then there exists a natural number $n$ 
	depending only on $d$ and $\mathfrak{R}$ satisfying the following.  
	Assume $(X,B)$ is a projective pair such that 
	\begin{itemize}
		
		\item $(X,B)$ is slc of dimension $d$,
		
		\item $B\in \Phi(\mathfrak{R})$, that is, the coefficients of $B$ are in $\Phi(\mathfrak{R})$, and 
		
		\item $K_X+B\sim_\Q 0$
	\end{itemize}
	Then  $n(K_X+B)\sim 0$.
	
\end{conj}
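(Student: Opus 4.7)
The plan is to reduce Conjecture \ref{conj-bound-sdlt-index} to the log canonical version Conjecture \ref{conj-bound-dlt-index} by passing to the normalization and invoking Koll\'ar's gluing theory for slc pairs. Write $\pi: X^\nu \to X$ for the normalization and decompose $X^\nu = \bigsqcup_i X_i$ into irreducible components. Adjunction, incorporating the conductor divisor, produces a boundary $B_i$ on each $X_i$ such that $(X_i, B_i)$ is a projective lc Calabi-Yau pair of dimension $d$, and $K_{X_i} + B_i = \pi_i^*(K_X + B)$ with $\pi_i = \pi|_{X_i}$.

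The first thing to check is that the coefficients of each $B_i$ still lie in a set of the form $\Phi(\mathfrak{R}')$ for a finite set of rationals $\mathfrak{R}'$ depending only on $d$ and $\mathfrak{R}$. This is the standard behaviour of hyperstandard coefficients under adjunction to a component of the normalization, together with the observation that the conductor divisor appears with coefficient $1$ on $X^\nu$. Applying Conjecture \ref{conj-bound-dlt-index} to each $(X_i, B_i)$ then yields a uniform $n_0$, depending only on $d$ and $\mathfrak{R}$, such that $n_0(K_{X_i} + B_i) \sim 0$ for every $i$, and hence $n_0 \pi^*(K_X + B) \sim 0$.

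The second step is to descend this linear equivalence from $X^\nu$ back to $X$. By Koll\'ar's gluing theorem for slc pairs, a Weil divisor class on $X^\nu$ descends to $X$ precisely when it is invariant under the gluing involution $\tau$ acting on the normalization of the conductor. One therefore has to control the induced $\tau$-action on sections of $n_0(K_{X^\nu} + B^\nu)$, where $B^\nu = \sum_i B_i$, and replace $n_0$ by a suitable multiple $n$ so that the action becomes trivial.

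The main obstacle is bounding the order of the gluing involution at the level of $\sim$ rather than $\sim_\Q$, purely in terms of $d$ and $\mathfrak{R}$. Since the normalization of the conductor naturally carries the structure of an slc Calabi-Yau pair of dimension $d-1$, the natural route is a simultaneous induction on dimension for both Conjectures \ref{conj-bound-dlt-index} and \ref{conj-bound-sdlt-index}: use the slc statement in dimension $d-1$ to bound the order of the involution on the conductor, and thereby deduce the slc statement in dimension $d$ from the lc statement in dimension $d$. The delicate point is ensuring that the coefficients of the induced boundary on the conductor, after the further round of adjunction needed to recognize it as an slc Calabi-Yau pair, still lie in a controlled hyperstandard set so that the inductive hypothesis genuinely applies.
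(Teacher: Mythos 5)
The statement you are trying to prove is stated in the paper as a conjecture, and the paper itself only establishes a conditional implication: Theorem \ref{thm-1.9} says that Conjecture \ref{conj-bound-dlt-index} in dimension $d$ \emph{together with} Conjecture \ref{conj_bounded_bir_auto} in dimension $d-1$ implies Conjecture \ref{conj-bound-sdlt-index} in dimension $d$ (via \cite{Xu-2018}, Section 6). Your first two steps (normalize, control the coefficients under adjunction with the conductor appearing with coefficient $1$, apply the lc index statement componentwise to get $n_0\pi^*(K_X+B)\sim 0$) are exactly the standard reduction and are fine, granting Conjecture \ref{conj-bound-dlt-index}. The gap is in your last step. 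To descend the linear equivalence you must produce a \emph{section} of $n(K_{X^\nu}+B^\nu)$ whose restriction to the normalization of the conductor is compatible with the gluing involution $\tau$ (in Fujino's language, a pre-admissible/admissible section); it is not enough that the divisor class is $\tau$-invariant. The obstruction is the order of the action of $\tau$ (more precisely, of the B-pluricanonical representation $\rho_m(\Bir(S,B_S))$ on the conductor pair $(S,B_S)$) on the one-dimensional space $H^0(S,m(K_S+B_S))$: this action is by a root of unity, and knowing that the slc index conjecture holds in dimension $d-1$, i.e.\ that $m(K_S+B_S)\sim 0$ for bounded $m$, tells you nothing about the order of that root of unity. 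So your proposed simultaneous induction does not close: the inductive hypothesis you invoke bounds the index of the conductor pair, not the order of the induced automorphism of its pluricanonical sections.

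What is actually needed at this point is precisely Conjecture \ref{conj_bounded_bir_auto} in dimension $d-1$: a bound, depending only on $d$ and $\mathfrak{R}$ (equivalently on the Cartier index and dimension), on the size of the image of the B-pluricanonical representation. Finiteness of this image is known (\cite{Fuj1}, \cite{Fujino-Gongyo-1}, cf.\ Proposition \ref{prop-finiteness-bir-representation}), which is why each individual slc pair has \emph{some} index, but uniform boundedness is open for dimension $\ge 2$ and, as remarked in the paper, appears to require control of Betti numbers beyond current birational-geometry techniques. Hence your argument, as written, proves at best the same conditional statement as Theorem \ref{thm-1.9}, and the claim that the dimension-$(d-1)$ slc index statement suffices to bound the gluing action is incorrect.
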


Note that the above conjecture hold in dimension 2 \cite{PSh-II}.  We will show that the above conjecture will imply the following statement in higher dimension. Conjecture \ref{conj-bound-dlt-index} can be broken down to the special cases below.

\begin{conj}\label{conj-absolute-index}
	let d be a natural number. Then there exists $n$ positive integer, depending only on $d$ such that if $X$ is a klt Calabi-Yau of dimension $d$, i.e. $K_X\sim_\Q0$, then $nK_X\sim 0$
\end{conj}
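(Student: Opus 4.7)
The plan is to attack Conjecture~\ref{conj-absolute-index} by a two-step strategy: first reduce to the case $K_X\sim 0$ via an index-one cover, then exploit a singular Beauville--Bogomolov decomposition to bound the degree of that cover. Since $X$ is klt with $K_X\sim_{\Q}0$, abundance in this absolute setting gives that $K_X$ is torsion in $\Pic(X)$; I would write $n$ for its order and form the associated cyclic cover $\pi\colon Y\to X$, which is quasi-\'etale. Then $Y$ is klt with $K_Y\sim 0$ and $Y\to X$ is Galois with group $\Z/n$, so the problem is reduced to bounding $n$ uniformly in $d$. This base reduction already recovers the known cases in low dimension (via Kawamata in dimension~$3$).

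Next I would apply the singular Beauville--Bogomolov decomposition theorem (H\"oring--Peternell, Greb--Guenancia--Kebekus, Druel, Druel--Guenancia, Bakker--Guenancia--Lehn): after a further finite quasi-\'etale cover $\tilde Y\to Y$ there is an isomorphism
\[
\tilde Y \;\cong\; A\times\prod_i V_i\times\prod_j Z_j,
\]
with $A$ an abelian variety, the $V_i$ singular strict Calabi--Yau varieties, and the $Z_j$ singular irreducible symplectic varieties. The Galois group $G$ of $\tilde Y\to X$ acts on this product, permutes factors of the same type, and preserves the global holomorphic volume form. Its image in $\mathrm{Aut}(A)$ modulo translations is a finite subgroup of $\mathrm{GL}_{2\dim A}(\Z)$, of order bounded by Minkowski in terms of $\dim A$, while on each $V_i$ and $Z_j$ the volume-preserving biregular automorphism group is finite by Beauville/Fujiki-type results. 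Combining these contributions would bound $|G|$, hence $n$, in terms of the dimensions of the factors and the orders of the relevant automorphism groups.

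The hard part will be making the bounds above uniform in $d$: this amounts, morally, to a boundedness statement for the strict Calabi--Yau and irreducible symplectic building blocks of bounded dimension, which is a major open problem in its own right. A plausible route is by induction on $d$: once Conjecture~\ref{conj-absolute-index} is known in dimensions $<d$, one can try to use the Albanese morphism of $Y$ to dispose of the abelian factor, and to restrict the $G$-action to lower-dimensional fixed loci on the Calabi--Yau and symplectic factors, combining this with the inductive statements proved earlier in the paper to cut down the dimension of the pieces that remain to be bounded. Circumventing the need for full boundedness of Calabi--Yau building blocks---perhaps by controlling only the torsion order of $K_X$ rather than the full automorphism action on $\tilde Y$---is the decisive step and the main obstacle to completing this program.
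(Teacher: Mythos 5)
The statement you set out to prove is not proved in the paper at all: Conjecture~\ref{conj-absolute-index} is used only as a \emph{hypothesis} in the inductive theorems (Theorems~\ref{thm-klt-index-induction} and~\ref{thm-induction-index}), and the paper's concluding remark explicitly records it as open in general --- known for $d\le 2$, known for terminal threefolds, claimed for non-terminal threefolds via \cite{jiang1}, and open for terminal $X$ with $\dim X\ge 4$. So there is no proof in the paper to compare yours against, and the relevant question is whether your sketch actually closes the conjecture. It does not, and you say so yourself: after the index-one cover and the singular Beauville--Bogomolov decomposition, what remains is to bound, uniformly in $d$ alone, the order of the cyclic Galois action (equivalently its weight on the holomorphic volume form) on the strict Calabi--Yau and irreducible symplectic factors. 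That is not a technical loose end; it is essentially the conjecture itself, repackaged --- a boundedness statement for Calabi--Yau building blocks (or at least for the torsion order of $K$ on them) that is a well-known open problem. The Minkowski bound genuinely disposes only of the linear part of the action on the abelian factor; note also that the translation part of the automorphism group of $A$ has unbounded order, so $|G|$ itself cannot be bounded and one must argue through the representation on $H^0$ of pluricanonical forms, which is exactly the kind of statement the paper isolates separately as Conjecture~\ref{conj_bounded_bir_auto} and likewise leaves open.

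Two further points where the outline would need repair even as a program. First, the opening reduction is close to circular: $K_X\sim_\Q 0$ already means $K_X$ is torsion (no abundance input is needed), and its order $n$ is by definition the quantity to be bounded, so passing to the cover $Y$ with $K_Y\sim 0$ only transfers the problem to bounding the order of a quasi-\'etale cyclic group action on $Y$ through its weight on the volume form. Second, the proposed inductive step --- restricting the $G$-action to lower-dimensional fixed loci and invoking the inductive theorems of this paper --- does not obviously connect: the paper's inductive results (Theorems~\ref{thm-lc-index-bounded} and~\ref{thm-klt-index-induction}) apply to non-klt pairs or to pairs with $B\neq 0$ admitting Mori fibre space or lc-trivial fibration structures, whereas fixed loci of a finite action on a variety with $K\sim 0$ carry no such structure for free, and a quotient $Y\to X$ is not a fibration of positive-dimensional fibres to which the canonical bundle formula applies. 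As a research direction your decomposition strategy is reasonable and is indeed how experts think about the problem, but as a proof it has a genuine gap precisely at the step that makes the statement a conjecture.
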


Notice that the above conjecture is really related to the mld near 1 for calabi-yau varieites.
\begin{conj}\label{conj-klt-index}
	let d be a natural number and $\mathfrak{R}$ be a finite set of rationals. Then there exists $n$ positive integer, depending only on $d,\mathfrak{R}$ such that if $(X,B)$ is a klt log Calabi-Yau of dimension $d$ with $B>0$, i.e. $K_X+B\sim_\Q0$, and $B\in \mathfrak{R}$ then $n(K_X+B)\sim 0$.
\end{conj}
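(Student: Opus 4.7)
The plan is to prove Conjecture \ref{conj-klt-index} by induction on $d$, with base case $d=2$ provided by \cite{PSh-II}. Fix $d\ge 3$ and assume the conjecture in all lower dimensions. Given $(X,B)$ klt with $B\neq 0$, coefficients in $\mathfrak{R}$, and $K_X+B\sim_\Q 0$, I would first pass to a $\Q$-factorial crepant model, which preserves the Cartier index that needs to be bounded.

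Next, pick a prime component $S\subseteq\Supp B$ and run a $(K_X+B-\epsilon S)$-MMP for a small rational $\epsilon>0$. Since $K_X+B\sim_\Q 0$, this is a $(-\epsilon S)$-MMP; because $-S$ is not pseudo-effective, it terminates with a Mori fibre contraction $f\colon X'\to Z$, and the hypotheses of Conjecture \ref{conj-klt-index} descend to $(X',B')$.

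If $\dim Z=0$, then $X'$ is Fano and $(X',B')$ is a klt log Calabi--Yau pair with coefficients in $\mathfrak{R}$; by Birkar's boundedness of complements, a universal multiple of $K_{X'}+B'$ is linearly trivial. If $\dim Z>0$, apply the Kawamata--Ambro canonical bundle formula to write
\[
K_{X'}+B' \sim_\Q f^*(K_Z+B_Z+M_Z),
\]
where the coefficients of the discriminant $B_Z$ can, by Shokurov's ACC and \cite{HMX}, be assumed to lie in a finite set of rationals depending only on $d$ and $\mathfrak{R}$. Then apply the inductive hypothesis to $(Z,B_Z+M_Z)$ and pull back along $f$.

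The main obstacle is the moduli part $M_Z$: closing the induction requires $M_Z$ to be b-semi-ample with uniformly bounded Cartier index --- the effective b-semi-ampleness conjecture --- which is open in general. For this reason, the inductive statement one really wants is the generalised-pair analogue of Conjecture \ref{conj-klt-index}, and packaging the generalised-pair hypothesis tightly enough to survive one application of the canonical bundle formula is where the bulk of the work would lie; the dimension bound in the main theorem of this paper should reflect exactly the dimensions in which this effective adjunction input is currently available.
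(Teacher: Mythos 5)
Your proposal is not a proof: you correctly identify the obstruction (effective b-semi-ampleness of the moduli part $M_Z$) but you do not overcome it, and the paper's argument is designed precisely to avoid ever needing it. There is also a second, structural gap in your induction: after the canonical bundle formula the base pair $(Z,B_Z+M_Z)$ may well have $B_Z=0$ and $M_Z\equiv 0$ (e.g.\ $K_Z\sim_\Q 0$), so the inductive hypothesis of Conjecture \ref{conj-klt-index}, which requires a nonzero boundary, does not apply to the base; what is needed there is the $B=0$ index statement, Conjecture \ref{conj-absolute-index}, which is open in dimension $\ge 4$. This is exactly why the paper does not prove Conjecture \ref{conj-klt-index} outright but the conditional Theorem \ref{thm-klt-index-induction}, with Conjecture \ref{conj-absolute-index} in dimension $\le d$ as an explicit hypothesis; an unconditional induction on $d$ with base case $d=2$, as you set it up, cannot close.

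The paper's route is different from your single Mori fibre contraction plus generalised-pair induction. It invokes the Di Cerbo--Svaldi tower (Theorem \ref{thm-Roberto-CY-tower}): after a crepant birational modification, $(X',B')$ admits a tower of $K$-Mori fibre spaces ending either at a point or at $X_k$ with $\dim X_k>0$ and $K_{X_k}\sim_\Q 0$. In the first case boundedness of Fano-type log Calabi--Yau fibrations (\cite{B-18}) gives log boundedness of $(X',B')$, hence a bounded index, with no canonical bundle formula at all. In the second case one does use the canonical bundle formula over $Z=X_k$, but since $K_Z\sim_\Q 0$ and $K_Z+B_Z+M_Z\sim_\Q 0$ one gets $B_Z=0$ and $M_Z\equiv 0$; the numerically trivial case of effective b-semi-ampleness is known (Floris, \cite{floris}, applicable because the general fibres lie in a bounded family by \cite{B-18}), so the moduli part is torsion of bounded index, and the remaining input is precisely $qK_Z\sim 0$ from the assumed Conjecture \ref{conj-absolute-index}. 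So where your plan stalls on the open effective adjunction conjecture, the paper trades it for the tower theorem plus boundedness results plus the numerically trivial (known) case of Floris, at the price of assuming the $B=0$ conjecture on the base.
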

Conjecture \ref{conj-absolute-index} together with Conjecture \ref{conj-klt-index} is called the index conjecture for klt log Calabi-Yau pairs.
\begin{conj}\label{conj-index-lc-no-klt}
		Let $d$ be a natural number and $\mathfrak{R}\subset [0,1]$ be a finite set of rational numbers.
	Then there exists a natural number $n$ 
	depending only on $d$ and $\mathfrak{R}$ satisfying the following.  
	Assume $(X,B)$ is a projective pair such that 
	\begin{itemize}
		
		\item $(X,B)$ is lc of dimension $d$ and not klt,
		
		\item $B\in \Phi(\mathfrak{R})$, that is, the coefficients of $B$ are in $\Phi(\mathfrak{R})$, and 
		
		\item $K_X+B\sim_\Q 0$
	\end{itemize}
	Then  $n(K_X+B)\sim 0$.
\end{conj}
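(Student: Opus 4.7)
The plan is to derive Conjecture \ref{conj-index-lc-no-klt} in dimension $d$ from Conjecture \ref{conj-bound-sdlt-index} in dimension $d-1$ by adjunction to a non-klt center. First I would replace $(X,B)$ with a $\Q$-factorial dlt modification $\pi:(Y,B_Y)\to (X,B)$, which exists by standard MMP and is crepant, so bounding the index of $K_Y+B_Y$ is equivalent to bounding that of $K_X+B$. The non-klt hypothesis forces $S:=\lfloor B_Y\rfloor\neq 0$. By Koll\'ar's adjunction, $(K_Y+B_Y)|_S=K_S+B_S$ with $(S,B_S)$ slc of dimension $d-1$, and the coefficients of the different $B_S$ lie in some $\Phi(\mathfrak{R}')$ with $\mathfrak{R}'$ depending only on $d$ and $\mathfrak{R}$ (by the standard Shokurov-type formulas for the different and preservation of hyperstandardness under adjunction).

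Applying Conjecture \ref{conj-bound-sdlt-index} in dimension $d-1$ as the inductive hypothesis yields a bounded positive integer $m$, depending only on $d$ and $\mathfrak{R}$, such that $m(K_S+B_S)\sim 0$. To promote this to a trivialization on $Y$, I would use the short exact sequence
\[
0\to \mathcal{O}_Y(m(K_Y+B_Y)-S)\to \mathcal{O}_Y(m(K_Y+B_Y))\to \mathcal{O}_S\to 0,
\]
where the surjection uses $m(K_Y+B_Y)|_S=m(K_S+B_S)\sim 0$. The constant section $1\in H^0(S,\mathcal{O}_S)$ will lift to $H^0(Y,\mathcal{O}_Y(m(K_Y+B_Y)))$ provided $H^1(Y,m(K_Y+B_Y)-S)=0$. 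Since $K_Y+B_Y\sim_\Q 0$ one has $m(K_Y+B_Y)-S\sim_\Q K_Y+\{B_Y\}$, so the required vanishing should come from Fujino-style vanishing for the lc pair $(Y,B_Y)$. A successful lift gives $m(K_Y+B_Y)\sim 0$ on $Y$, which descends to $X$ via $\pi$.

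The hardest part, I expect, is this lifting step. The $H^1$-vanishing is delicate because $-S$ is anti-effective, so Kawamata--Viehweg does not apply directly; one must argue carefully via Fujino's vanishing for lc pairs after an appropriate rewriting, and possibly after running an MMP to put $m(K_Y+B_Y)-S$ in a favorable position. A further difficulty arises when $\lfloor B_Y\rfloor$ has several irreducible components meeting along deeper lc strata: the trivialization on $S$ must respect the gluing data across components, which may require a secondary induction on the dimension of the lc strata together with uniform finiteness of the $B$-representation of $\Bir(S,B_S)$ on $H^0(S,m(K_S+B_S))$ in the style of Fujino--Gongyo, with all bounds kept uniform in $d$ and $\mathfrak{R}$.
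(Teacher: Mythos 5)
Your opening moves (dlt modification, adjunction to $S=\lfloor B_Y\rfloor$, applying Conjecture \ref{conj-bound-sdlt-index} in dimension $d-1$ to the slc pair $(S,B_S)$) coincide with the paper's starting point, but the mechanism you propose for transferring the trivialization from $S$ back to $Y$ has a genuine gap, not a technicality to be "argued carefully". The vanishing $H^1(Y,\mathcal{O}_Y(m(K_Y+B_Y)-S))=0$ is false in general: take $Y=E\times\PP^1$ with $E$ an elliptic curve and $B_Y=E\times\{0\}+E\times\{\infty\}$, a log smooth pair with $K_Y+B_Y\sim 0$ and coefficients in $\Phi(\mathfrak{R})$. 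Here $m(K_Y+B_Y)-S\sim -S\sim K_Y$ and $H^1(Y,K_Y)\cong H^1(Y,\mathcal{O}_Y)^\vee\neq 0$. Moreover the failure is not only of the vanishing but of the lifting strategy itself: in this example the restriction map $H^0(Y,\mathcal{O}_Y(m(K_Y+B_Y)))\to H^0(S,\mathcal{O}_S(m(K_S+B_S)))\cong \C\oplus\C$ is the diagonal $\C\to\C^2$, so it is not surjective, and only special "compatible" trivializing sections on $S$ can lift; producing such a section is exactly the hard gluing/monodromy problem. Numerical triviality of $m(K_Y+B_Y)-S-(K_Y+\{B_Y\})$ is not enough for any of Fujino's vanishing or extension theorems (they need nef and log big, or semi-ample, data), so there is no "favorable position" to run to. Note also that your fallback, uniform control of $\rho_m(\Bir(S,B_S))$, is precisely Conjecture \ref{conj_bounded_bir_auto}, which is open in dimension $\geq 2$ and is deliberately not among the hypotheses of Theorem \ref{thm-lc-index-bounded}; and since you apply the slc hypothesis to $(S,B_S)$ itself rather than to its normalization, the gluing on $S$ is already subsumed in that hypothesis—the real unresolved step is the passage from $S$ to $Y$.

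The paper circumvents this by introducing a fibration instead of lifting cohomologically. After the dlt modification it runs an MMP on $K_X+B-\epsilon\lfloor B\rfloor\sim_\Q -\epsilon\lfloor B\rfloor$, ending with a Mori fibre space $f\colon X\to V$ on which $\lfloor B\rfloor$ is $f$-ample. If $\dim V=0$, then $X$ is of Fano type and boundedness of complements (Theorem \ref{t-bnd-compl}) bounds the index. If $\dim V=\dim X-1$, the canonical bundle formula plus effective b-semiampleness of the moduli part in relative dimension one (\cite{PSh-II}) reduces to the index statement on $V$ in lower dimension. In the remaining case some component $S$ of $\lfloor B\rfloor$ is horizontal over $V$; one applies the slc hypothesis in dimension $d-1$ to $(S,B_S)$ as you do, but then the trivializing rational function for $q(K_S+B_S)$ is shown to descend to $V$ (using that $S\to V$ is a contraction, proved via connectedness of ample divisors on fibres of dimension $\geq 2$), giving $q(K_V+B_V+M_V)\sim 0$ over the smooth locus and hence on $V$, and finally one pulls back to $X$. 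It is this descent to the base, rather than a lifting from $S$ to $X$, that makes the induction work; to repair your argument you would need either such a fibration structure or a genuinely new substitute for the failed surjectivity.
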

Clearly Conjecture \ref{conj-bound-dlt-index} is equivalent to Conjecture \ref{conj-absolute-index} + Conjecture \ref{conj-klt-index} + Conjecture \ref{conj-index-lc-no-klt}. We have the following inductive results in this paper.
\begin{thm}\label{thm-lc-index-bounded}
	Assuming Conjecture \ref{conj-bound-sdlt-index} hold in dimension $\leq d-1$, then Conjecture \ref{conj-index-lc-no-klt} hold in dimension d. 
\end{thm}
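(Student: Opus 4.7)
The plan is to pass from an lc (non-klt) pair on $X$ to the slc Calabi-Yau pair obtained by restricting to the non-klt locus of a dlt modification, apply the assumed conjecture there (since that locus has dimension $d-1$), and then lift the resulting linear equivalence back to $X$.

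First I would take a $\Q$-factorial dlt modification $\pi\colon(Y,B_Y)\to(X,B)$, so that $(Y,B_Y)$ is projective dlt with $K_Y+B_Y=\pi^*(K_X+B)\sim_\Q 0$. Because $(X,B)$ is lc but not klt, $S:=\lfloor B_Y\rfloor$ is a nonzero reduced Weil divisor. Since $\pi$ is crepant and $\pi_*\mathcal{O}_Y(N(K_Y+B_Y))=\mathcal{O}_X(N(K_X+B))$, it suffices to bound the Cartier index of $K_Y+B_Y$. Next, by dlt adjunction (Kollár), $(K_Y+B_Y)|_S=K_S+B_S$ with $(S,B_S)$ an slc pair of dimension $d-1$ satisfying $K_S+B_S\sim_\Q 0$; a standard analysis of the different for DCC coefficients shows the coefficients of $B_S$ lie in $\Phi(\mathfrak{R}')$ for some finite rational set $\mathfrak{R}'$ depending only on $\mathfrak{R}$. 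The assumed Conjecture \ref{conj-bound-sdlt-index} in dimension $\leq d-1$ then produces an integer $n$, depending only on $d$ and $\mathfrak{R}$, with $n(K_S+B_S)\sim 0$.

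The heart of the argument is lifting this linear equivalence from $S$ to $Y$: I want to show $N(K_Y+B_Y)\sim 0$ for some $N$ bounded by a function of $n$ (hence of $d,\mathfrak{R}$). Choosing $n\mid N$, set $L:=\mathcal{O}_Y(N(K_Y+B_Y))$, so $L$ is a torsion line bundle with $L|_S\cong\mathcal{O}_S$. I would consider the sequence
\begin{equation*}
0\to\mathcal{O}_Y(N(K_Y+B_Y)-S)\to\mathcal{O}_Y(N(K_Y+B_Y))\to\mathcal{O}_S(N(K_S+B_S))\to 0
\end{equation*}
and note that $N(K_Y+B_Y)-S\sim_\Q K_Y+\{B_Y\}$ because $(N-1)(K_Y+B_Y)\sim_\Q 0$ and $B_Y=S+\{B_Y\}$. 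If I can arrange the relevant $H^1$-vanishing on $Y$, the nowhere-vanishing section of $L|_S$ extends to a section $\tilde\sigma\in H^0(Y,L)$; its zero-divisor would be effective and $\Q$-linearly equivalent to $0$, hence zero on projective $Y$, forcing $\tilde\sigma$ to be a nonvanishing section and $L\cong\mathcal{O}_Y$. Descending to $X$ via $\pi_*$ then gives $N(K_X+B)\sim 0$.

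The main obstacle is the $H^1$-vanishing in the lifting step. Since $N(K_Y+B_Y)-S$ has no automatic nef/big component beyond $K_Y+\{B_Y\}$, standard Kawamata--Viehweg does not apply directly, and one must appeal to an injectivity/torsion-freeness result in the spirit of Kollár for dlt Calabi-Yau pairs: concretely, a statement that the restriction $\operatorname{Pic}^\tau(Y)\to\operatorname{Pic}^\tau(S)$ on torsion elements of the Picard group has kernel bounded in terms of $d$ and $\mathfrak{R}$. This will use the connectedness of $S=\lfloor B_Y\rfloor$ for dlt log Calabi-Yau pairs together with Kollár/Fujino-type vanishing on the klt complement $Y\setminus S$. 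Turning this qualitative injectivity into the explicit bound $N\leq N(d,\mathfrak{R})$ for the lifted index is the delicate step, and the only place the hypothesis on dimension $\leq d-1$ is used beyond Step~3.
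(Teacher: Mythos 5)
Your opening reduction (dlt modification, adjunction to the non-klt locus, applying Conjecture~\ref{conj-bound-sdlt-index} in dimension $d-1$ to get $n(K_S+B_S)\sim 0$) agrees with the paper, but the step you yourself call delicate --- lifting the triviality from $S$ back to $Y$ --- is a genuine gap, not a routine vanishing argument. The divisor $N(K_Y+B_Y)-S$ is only $\Q$-linearly equivalent to $K_Y+\{B_Y\}\sim_\Q -S$, which is in no sense nef and big, so Kawamata--Viehweg is unavailable, and the Koll\'ar/Fujino injectivity and torsion-freeness theorems do not yield surjectivity of $H^0(Y,L)\to H^0(S,L|_S)$ in this situation; moreover your exact sequence already presupposes that $N(K_Y+B_Y)$ is Cartier, which is circular since bounding that index is the goal. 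The statement you would need --- that the kernel of $\operatorname{Pic}^\tau(Y)\to\operatorname{Pic}^\tau(S)$, or the order of a torsion line bundle trivial on $S=\lfloor B_Y\rfloor$, is bounded purely in terms of $d$ and $\mathfrak{R}$ --- is not known; controlling exactly this kind of torsion/gluing phenomenon is the role of Conjecture~\ref{conj_bounded_bir_auto} in the paper, which is open in dimension $\ge 2$, so it cannot be absorbed as a ``standard'' appeal to connectedness plus Fujino-type vanishing.

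The paper avoids lifting from a divisor to the ambient variety altogether. After the dlt modification it runs an MMP on $K_X+B-\epsilon\lfloor B\rfloor$ to reach a Mori fiber space $f\colon X\to V$ with $\dim V<d$ and argues by cases: if $\dim V=0$ then $X$ is of Fano type and boundedness of complements (Theorem~\ref{t-bnd-compl}) bounds the index directly; if $\dim V=d-1$ the fibers are rational curves and the canonical bundle formula plus effective b-semiampleness in relative dimension one (Prokhorov--Shokurov) reduces the claim to the index statement on $V$ in dimension $d-1$; and if $0<\dim V\le d-2$ one picks a component $S$ of $\lfloor B\rfloor$ horizontal over $V$ (it exists since $\lfloor B\rfloor$ is $f$-ample), shows $S\to V$ is a contraction because ample divisors on fibers of dimension $\ge 2$ are connected, applies the slc induction to $(S,B_S)$, and then \emph{descends} the relation $q(K_S+B_S)\sim 0$ to the base via $f_*\mathcal{O}_S=\mathcal{O}_V$, obtaining $q(K_V+B_V+M_V)\sim 0$ and hence $q(K_X+B)\sim 0$ by pullback. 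The direction of travel is from $S$ down to $V$ and back up to $X$ by pullback of the canonical bundle formula, never from $S$ up to $X$ by extension of sections; that is what makes the induction close with only Conjecture~\ref{conj-bound-sdlt-index} in dimension $\le d-1$ as input, whereas your route would additionally require an unproven bounded-kernel statement.
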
 

We have the following results.
\begin{thm}\label{thm-klt-index-induction}
	Assuming Conjecture \ref{conj-absolute-index} in dimension $\leq d$, then Conjecture \ref{conj-klt-index} hold in dimension $\leq d+1$. In particular, assuming Conjecture \ref{conj-absolute-index} in dimension $\leq d$, then the Index conjecture for klt log Calabi-Yau pairs (Conjecture \ref{conj-absolute-index}+\ref{conj-klt-index}) hold in dimension $\leq d$. 
\end{thm}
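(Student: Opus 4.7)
I would argue by induction on $d$, using the present theorem in strictly lower dimensions to bootstrap the assumption (Conjecture~\ref{conj-absolute-index} in dimension $\leq d$) into the fuller statement that both Conjecture~\ref{conj-absolute-index} and Conjecture~\ref{conj-klt-index} hold in dimensions $\leq d$. The base case is immediate: for $d=0$, any klt log Calabi--Yau pair $(X,B)$ of dimension $\leq 1$ with $B\neq 0$ has $X\cong\PP^1$, and the index of $K_X+B$ is simply the least common denominator of the coefficients of $B$, bounded purely in terms of $\mathfrak{R}$.

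For the inductive step let $(X,B)$ be klt log Calabi--Yau of dimension $n\leq d+1$ with $B\neq 0$ and coefficients in $\mathfrak{R}$. Since $K_X\sim_\Q -B$ is anti-effective and nonzero, $K_X$ is not pseudoeffective, so running a $K_X$-MMP terminates in a Mori fiber space $f\colon X'\to Z$ to which $(X,B)$ is crepant birational; it suffices to bound the index of $K_{X'}+B'$ where $B'$ is the strict transform of $B$. The coefficients of $B'$ still lie in $\mathfrak{R}\subset [0,1)$, so the pair is $\epsilon$-lc for $\epsilon:=1-\max\mathfrak{R}>0$. When $\dim Z=0$, $X'$ is itself a Fano variety; Birkar's boundedness of $\epsilon$-lc Fano varieties (BAB) places $X'$ in a bounded family and yields a bounded Cartier index for $K_{X'}+B'$. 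When $\dim Z\geq 1$, a general fiber $F$ of $f$ is klt log Calabi--Yau of dimension $\leq d$ with coefficients in $\mathfrak{R}$, and the inductive hypothesis produces a bound $m=m(d,\mathfrak{R})$ with $m(K_F+B'|_F)\sim 0$.

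To descend this fiberwise bound to the total space I would invoke the Ambro--Fujino canonical bundle formula, writing $K_{X'}+B'\sim_\Q f^\ast(K_Z+B_Z+M_Z)$ with discriminant $\Q$-divisor $B_Z$ and b-nef moduli part $M_Z$, so that $(Z,B_Z+M_Z)$ is a generalized log Calabi--Yau pair of dimension $\leq d$. The coefficients of $B_Z$ fall in a finite set controlled by $\mathfrak{R}$ and $m$. The main obstacle is the moduli part $M_Z$: one must extract from the bounded fiberwise index $m$ a bounded index for $M_Z$, via the effective b-semiampleness package for families of $m$-indexed klt log Calabi--Yau varieties of dimension $\leq d$, together with the monodromy action on the uniform fiberwise section of $m(K_F+B'|_F)$. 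Once that is in place, applying the inductive hypothesis (in its generalized-pair form) to $(Z,B_Z+M_Z)$ gives a bounded index on the base, and pulling back through $f$ combined with tracking the crepant modification $X\dashrightarrow X'$ completes the proof.
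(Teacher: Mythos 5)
Your proposal has two genuine gaps, and both occur exactly where the paper's proof takes a different route. First, the claim that $(X',B')$ is $\epsilon$-lc with $\epsilon=1-\max\mathfrak{R}$ because the coefficients of $B'$ lie in $\mathfrak{R}\subset[0,1)$ is false: klt-ness plus boundary coefficients bounded away from $1$ does not bound the log discrepancies of exceptional divisors away from $0$ (already with $B=0$, cones over rational normal curves of degree $n$ are klt Fano surfaces of Picard number one whose minimal log discrepancy tends to $0$, and they are unbounded). Consequently BAB does not apply in your case $\dim Z=0$; the boundedness needed there is precisely the nontrivial input the paper takes from [\cite{B-18}, Theorem 1.4] (the Proposition following Theorem \ref{thm-Roberto-CY-tower}), which is tailored to klt Calabi--Yau pairs with coefficients in a finite set fibred by towers of Mori fibre spaces, and is not a formal consequence of BAB.

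Second, and more seriously, in your fibred case the base $(Z,B_Z+M_Z)$ is a generalized log Calabi--Yau pair with a genuinely nontrivial moduli part, and the step you yourself flag as ``the main obstacle'' is an open problem: effective b-semiampleness of $M_Z$ is known in relative dimension one (\cite{PSh-II}) and when $M_Z$ is numerically trivial (\cite{floris}), but not in the generality your MMP produces; moreover the inductive hypothesis you may use (Conjectures \ref{conj-absolute-index} and \ref{conj-klt-index} in dimension $\leq d$) concerns honest pairs, so there is no ``generalized-pair form'' of it available to apply to $(Z,B_Z+M_Z)$. The paper avoids both issues by invoking Theorem \ref{thm-Roberto-CY-tower} (Di Cerbo--Svaldi): after a crepant birational modification there is a tower of Mori fibre spaces whose final base $X_k$ is either a point (handled by the boundedness result above) or satisfies $K_{X_k}\sim_\Q 0$ with no boundary. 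In the latter case, writing $Z=X_k$, the relations $K_Z+B_Z+M_Z\sim_\Q 0$ and $K_Z\sim_\Q 0$ force $B_Z=0$ and $M_Z\equiv 0$; then [\cite{floris}, Thm 1.3] bounds the torsion index of the moduli part (the fibre index being bounded via [\cite{B-18}, Theorem 1.4], not by induction), and Conjecture \ref{conj-absolute-index} in dimension $\leq d$ --- the only hypothesis actually assumed --- finishes on $Z$. To salvage your outline you would need to replace the single $K_X$-MMP by this tower, so that the base is an honest Calabi--Yau variety rather than an arbitrary generalized pair; as written, the argument rests on a false $\epsilon$-lc claim and on effective b-semiampleness in arbitrary relative dimension, which is not available.
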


In order to relate the above conjectures, we have the following conjecture.

\begin{conj}\label{conj_bounded_bir_auto}
	Let $n,d $ be a natural number and let $(X,B)$ be $d$-dimensional projective pair such that $(X,B)$ is dlt and $n(K_X+B)\sim 0$. The $Bir(X,B)$ denote the B-birational automorphism group of $(X,B)$. Then there exists $m,N\in \N$ depending only on $n,d$ such that $\rho_m(Bir(X,B))$ has size bounded by $N$, where $\rho_m: Bir(X,B)\rightarrow H^0(X,m(K_X+B))$ denotes the natural action by pulling back sections.  
\end{conj}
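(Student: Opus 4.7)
The plan is to invoke the known finiteness theorem for B-pluri-canonical representations of dlt log Calabi--Yau pairs, then upgrade it to an effective bound in $n,d$.

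Take $m$ to be any positive multiple of $n$; then $m(K_X+B)\sim 0$, so $H^0(X,m(K_X+B))\cong\C$ and $\rho_m$ reduces to a character $\chi_m\colon\Bir(X,B)\to\C^*$. By the finiteness theorem for B-pluri-canonical representations of dlt log Calabi--Yau pairs (due to Fujino, building on earlier work for smooth Calabi--Yau manifolds going back to Ueno), the image of $\chi_m$ is finite in the abstract; the content of the conjecture is to bound its order uniformly in $n,d$.

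To this end, pass to the index one cover $\pi\colon\tilde X\to X$, a cyclic cover of degree $n$ with $K_{\tilde X}+\tilde B\sim 0$, with $(\tilde X,\tilde B)$ again dlt, equipped with a natural $\mu_n$-action. Every element of $\Bir(X,B)$ lifts, modulo the Galois group $\mu_n$, to an element of $\Bir(\tilde X,\tilde B)$, yielding a short exact sequence
\[
1\to\mu_n\to\tilde G\to\Bir(X,B)\to 1
\]
with $\tilde G\subseteq\Bir(\tilde X,\tilde B)$. Up to a factor of $n$, the task then reduces to bounding the character of $\tilde G$ acting on $H^0(\tilde X,K_{\tilde X}+\tilde B)\cong\C$. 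For each $\tilde\phi\in\tilde G$, a common resolution $p,q\colon W\to\tilde X$ with $p^*(K_{\tilde X}+\tilde B)=q^*(K_{\tilde X}+\tilde B)$ identifies the character value with the eigenvalue of $\tilde\phi^*$ on $H^0(Y,\Omega^d_Y(\log \tilde B_Y))$ for a smooth log resolution $Y\to\tilde X$, a Hodge-theoretic quantity.

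The main obstacle is producing a uniform bound $N(n,d)$ on the order of this character. Standard proofs of pluri-canonical finiteness are non-effective, relying on compactness arguments via period maps or Hodge metrics that yield no explicit bound. A plausible route is (i) to bound the torsion of $\Pic(\tilde X)$ in terms of $n,d$, and (ii) to bound the Betti numbers of a suitable log resolution of $(\tilde X,\tilde B)$ in terms of $n,d$. Step (ii) is the essential difficulty: without a BAB-type boundedness result for dlt log Calabi--Yau pairs, there is no known way to bound the Betti numbers uniformly, and hence no way to bound the possible orders of finite subgroups of the integral automorphism group of $H^d(Y,\Z)$. Consequently, the conjecture appears to be of comparable depth to existing boundedness conjectures for log Calabi--Yau pairs in higher dimensions.
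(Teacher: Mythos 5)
You are attempting to prove what the paper itself states only as a conjecture: the paper gives no proof of this statement, and it explicitly records that it is known only for $d=1$ and open for $d\ge 2$, the only established fact being the abstract finiteness of $\rho_m(\Bir(X,B))$ due to Fujino and Fujino--Gongyo. Your proposal does not close that gap. After correctly observing that $m(K_X+B)\sim 0$ makes $\rho_m$ a character $\Bir(X,B)\to \C^*$ and citing the (non-effective) finiteness theorem, the uniform bound $N=N(n,d)$ --- which is the entire content of the conjecture --- is deferred to your steps (i) and (ii), and you yourself concede that step (ii), bounding the Betti numbers of a log resolution uniformly in $n,d$, has no known proof. This is exactly the paper's own diagnosis (the remark after the conjecture relates it to boundedness of Betti numbers for log Calabi--Yau pairs and calls it out of reach by purely birational methods), so what you have written is an accurate assessment of the difficulty, but it is not a proof, and it should not be presented as one.

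If one did try to push your reduction further, several intermediate steps also need justification. The index one cover $\pi\colon \tilde X\to X$ associated to $n(K_X+B)\sim 0$ with $B$ fractional must be the cyclic cover branched along the fractional part of $B$; the pulled-back pair $(\tilde X,\tilde B)$ is lc but need not be dlt, so you cannot simply assert that the covering pair is again dlt. Lifting an arbitrary B-birational self-map of $(X,B)$ to the cover, compatibly with the covering datum, requires an argument before you can write the exact sequence $1\to \mu_n\to \tilde G\to \Bir(X,B)\to 1$. Finally, identifying the character value with an eigenvalue of the action on $H^0(Y,\Omega^d_Y(\log D))$ for a log resolution only makes sense once the boundary has been made reduced (i.e.\ on the cover), and even then the passage from a bound on torsion in cohomology or on finite subgroups of $\mathrm{GL}(H^d(Y,\Z))$ to a bound on the order of this particular eigenvalue needs the very boundedness input that is missing. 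In short, the proposal reproduces the known finiteness statement and a plausible strategy, but the effective content of the conjecture remains entirely unproved.
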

\begin{rem}
	Note that this has been shown by \cite{Fuj1} and \cite{Fujino-Gongyo} that $\rho_m$ is finite. Here we need it to be bounded. The above conjecture is true for $d=1$ but it is open for $d\geq 2$.
\end{rem}
Using the above conjecture, we have the following results.
\begin{thm}\label{thm-1.9}
    Conjecture \ref{conj-bound-dlt-index} in dimension $d$ and Conjecture \ref{conj_bounded_bir_auto} in dimension $d-1$ implies conjecture \ref{conj-bound-sdlt-index} in dimension $d$.
\end{thm}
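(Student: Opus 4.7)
My plan is the standard normalization-and-descent strategy. Given a projective slc log Calabi-Yau pair $(X,B)$ of dimension $d$ with $B\in\Phi(\mathfrak R)$ and $K_X+B\sim_\Q 0$, let $\pi\colon \bar X\to X$ be the normalization, $\bar D$ the conductor divisor on $\bar X$, and $\bar B=\pi_*^{-1}B$. On each irreducible component $(\bar X,\bar B+\bar D)$ is lc of dimension $d$, its coefficients still lie in $\Phi(\mathfrak R)$ (since $\bar D$ appears with coefficient $1\in\Phi(\mathfrak R)$), and $K_{\bar X}+\bar B+\bar D\sim_\Q 0$. By Conjecture~\ref{conj-bound-dlt-index} in dimension $d$, there is a positive integer $n$, depending only on $d$ and $\mathfrak R$, with $n(K_{\bar X}+\bar B+\bar D)\sim 0$. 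The remaining work is to descend this linear equivalence through $\pi$.

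By Koll\'ar's gluing theory for slc pairs, a line bundle $\mathcal L$ on $\bar X$ descends to a line bundle on $X$ precisely when a trivializing section (when it exists) is invariant, after restriction to the normalization of the conductor $\bar D^\nu$, under the canonical involution $\tau\colon \bar D^\nu\to\bar D^\nu$ that swaps the two branches of $\bar D$ over each codimension-one component of the conductor of $\pi$. By adjunction, $(\bar D^\nu,\Diff_{\bar D^\nu}(\bar B))$ is a dlt log Calabi-Yau pair of dimension $d-1$ (possibly after a dlt modification of $(\bar X,\bar B+\bar D)$) with $n(K_{\bar D^\nu}+\Diff_{\bar D^\nu}(\bar B))\sim 0$, and $\tau$ is a B-birational automorphism of this pair.

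Applying Conjecture~\ref{conj_bounded_bir_auto} in dimension $d-1$ to this pair and the integer $n$, I obtain $m,N\in\N$ depending only on $d,\mathfrak R$ such that $\rho_m(\Bir(\bar D^\nu,\Diff))$ has cardinality at most $N$, where $\rho_m$ is the pullback action on $H^0(\bar D^\nu,m(K_{\bar D^\nu}+\Diff))$. Pick a nowhere-zero section $s$ of $mn(K_{\bar X}+\bar B+\bar D)$; its restriction to $\bar D^\nu$ is an eigenvector for $\rho_{mn}(\tau)$, with eigenvalue a root of unity of order at most $N$. Hence $s^{N!}$ is a $\tau$-invariant trivializing section of $N!\,mn(K_{\bar X}+\bar B+\bar D)$ on the conductor, so by the gluing criterion $N!\,mn(K_X+B)\sim 0$, with $N!\,mn$ depending only on $d$ and $\mathfrak R$, as required.

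\emph{Main obstacle.} The delicate step is the third one. First, $\tau$ may permute the irreducible components of $\bar D^\nu$, so one must interpret $\Bir$ on the (possibly reducible) dlt pair and verify that boundedness of $\rho_m$ genuinely controls the eigenvalue of $\rho_{mn}(\tau)$ acting on the specific trivializing section we wish to descend. Second, one must check that $\Diff_{\bar D^\nu}(\bar B)$ has coefficients in $\Phi(\mathfrak R')$ for a finite set $\mathfrak R'$ depending only on $\mathfrak R$ and $d$, so that the bounds from Conjecture~\ref{conj_bounded_bir_auto} depend only on $d,\mathfrak R$; this uses standard different-coefficient computations. Third, when $(\bar X,\bar B+\bar D)$ is only lc, the dlt modification needed to apply Conjecture~\ref{conj_bounded_bir_auto} must be compatible with the gluing data, so that descent from $\bar X$ to $X$ is not disrupted. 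Once these points are handled, the bookkeeping above produces the desired bounded index.
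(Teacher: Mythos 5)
Your normalization-and-descent strategy (pass to the normalization, get bounded index there from Conjecture \ref{conj-bound-dlt-index} in dimension $d$, then descend a trivializing section through the gluing involution $\tau$ on the normalized conductor using boundedness of the B-pluricanonical representation from Conjecture \ref{conj_bounded_bir_auto} in dimension $d-1$) is exactly the approach the paper intends: its entire proof of Theorem \ref{thm-1.9} is a one-line citation to \cite{Xu-2018}, Section 6, where this descent is carried out. The obstacles you flag --- $\tau$ permuting components of $\bar D^\nu$ (so the restricted section is only an eigenvector up to a bounded permutation/eigenvalue bookkeeping), control of the coefficients of the different, and compatibility of the dlt modification with the gluing data --- are precisely the technical content handled in that reference via Fujino's admissible/pre-admissible section machinery, so your proposal matches the paper's proof in substance.
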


Putting all of these together, we have the following results, which is the main result of the paper.
\begin{thm}\label{thm-induction-index}
	Assuming Conjecture \ref{conj-absolute-index} hold in dimension $\leq d$ and Conjecture \ref{conj_bounded_bir_auto} in dimension $\leq d-2$, Conjecture \ref{conj-bound-dlt-index} hold in dimension $d$.\\
		Assuming Conjecture \ref{conj-absolute-index} hold in dimension $\leq d$ and Conjecture \ref{conj_bounded_bir_auto} in dimension $\leq d-1$, Conjecture \ref{conj-bound-sdlt-index} hold in dimension $d$.
\end{thm}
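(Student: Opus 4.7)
The plan is to prove both assertions by a single simultaneous induction on the dimension $d$. Since Conjectures \ref{conj-bound-dlt-index} and \ref{conj-bound-sdlt-index} are classical in dimensions $1$ and $2$ (see \cite{PSh-II}), the base case is free, and one may focus on the inductive step. The two parts must be treated together because the dlt statement in dimension $d$ calls on the sdlt statement in all dimensions $<d$ (through Theorem \ref{thm-lc-index-bounded}), while the sdlt statement in dimension $d$ calls on the dlt statement in the very same dimension $d$ (through Theorem \ref{thm-1.9}).

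In the inductive step I would first establish the dlt half in dimension $d$ under its hypotheses, namely Conjecture \ref{conj-absolute-index} in dim $\leq d$ and Conjecture \ref{conj_bounded_bir_auto} in dim $\leq d-2$. By the equivalence of Conjecture \ref{conj-bound-dlt-index} with \ref{conj-absolute-index} $+$ \ref{conj-klt-index} $+$ \ref{conj-index-lc-no-klt} recorded in the introduction, it suffices to verify the three pieces in dimension $d$ separately. Conjecture \ref{conj-absolute-index} in dimension $d$ is a hypothesis; Conjecture \ref{conj-klt-index} in dimension $d$ follows from Theorem \ref{thm-klt-index-induction} using Conjecture \ref{conj-absolute-index} in dim $\leq d-1$; and Conjecture \ref{conj-index-lc-no-klt} in dimension $d$ follows from Theorem \ref{thm-lc-index-bounded}, provided Conjecture \ref{conj-bound-sdlt-index} is available in every dimension $d'\leq d-1$. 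For each such $d'$ I would invoke the inductive hypothesis for the sdlt half; the hypotheses it requires, namely Conjecture \ref{conj-absolute-index} in dim $\leq d'$ and Conjecture \ref{conj_bounded_bir_auto} in dim $\leq d'-1 \leq d-2$, are both covered by the standing assumptions.

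I would then deduce the sdlt half in dimension $d$ under the slightly stronger hypothesis Conjecture \ref{conj_bounded_bir_auto} in dim $\leq d-1$. By Theorem \ref{thm-1.9} this reduces to Conjecture \ref{conj-bound-dlt-index} in dimension $d$ together with Conjecture \ref{conj_bounded_bir_auto} in dimension $d-1$. The second is a direct hypothesis; the first was just established, and its weaker requirement on Conjecture \ref{conj_bounded_bir_auto} (dim $\leq d-2$) is clearly entailed by what we are currently assuming.

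No new geometric input is required beyond Theorems \ref{thm-lc-index-bounded}, \ref{thm-klt-index-induction} and \ref{thm-1.9}; the only real obstacle is bookkeeping. The subtle point is the one-dimension gap between the birational-automorphism hypothesis in the dlt version (dim $\leq d-2$) and that in the sdlt version (dim $\leq d-1$): this gap is exactly the loss sustained when passing through Theorem \ref{thm-1.9}, and it matches precisely what is needed to invoke the sdlt case one dimension lower inside the proof of the lc-no-klt piece. Once this matching is verified, the theorem is an immediate consequence of the earlier inductive results.
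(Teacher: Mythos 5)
Your proposal is correct and follows essentially the same route as the paper, which simply cites Theorems \ref{thm-klt-index-induction}, \ref{thm-lc-index-bounded} and \ref{thm-1.9} without spelling out the induction; your version makes the simultaneous induction and the bookkeeping of the dimension gap for Conjecture \ref{conj_bounded_bir_auto} explicit, and that bookkeeping checks out.
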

In particular, we have the following definitive results. 
\begin{thm}\label{thm-lc-index-dim-3-4}
	Conjecture \ref{conj-bound-dlt-index} hold in dimension 3.
\end{thm}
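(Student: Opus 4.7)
The plan is to apply Theorem \ref{thm-induction-index} in dimension $d=3$, which reduces Theorem \ref{thm-lc-index-dim-3-4} to verifying two standing hypotheses, both of which are already available outside this paper.

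First, I would verify Conjecture \ref{conj-absolute-index} in dimensions at most three, namely boundedness of the Cartier index for klt Calabi-Yau varieties of dimension $\leq 3$. For $d=1$ the statement is trivial, since $K_X\sim_\Q 0$ on a smooth curve forces $X$ to be elliptic and $K_X\sim 0$. For $d=2$ it follows from the Enriques--Kodaira classification applied to the minimal resolution of a klt surface with $K_X\sim_\Q 0$: the possible underlying surfaces yield a universal bound on the index. For $d=3$, the bounded Cartier index of klt Calabi-Yau threefolds is a classical theorem going back to Kawamata and Morrison, with subsequent refinements in the literature.

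Second, I would invoke Conjecture \ref{conj_bounded_bir_auto} in dimension $\leq 1$, which is the case explicitly flagged as known in the remark following that conjecture. Concretely, for a one-dimensional dlt pair $(X,B)$ with $n(K_X+B)\sim 0$, we have $h^0(X,m(K_X+B))=1$ whenever $n\mid m$, so the representation $\rho_m$ of $\Bir(X,B)$ on $H^0(X,m(K_X+B))$ factors through the multiplicative group of the base field. Combined with the finiteness of $\rho_m(\Bir(X,B))$ due to \cite{Fuj1} and \cite{Fujino-Gongyo}, this yields a uniform bound on the image, which is all that is needed.

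With these two inputs in hand, Theorem \ref{thm-induction-index} applied with $d=3$ (so that the bir-auto hypothesis is required only up to dimension $d-2=1$) immediately delivers Conjecture \ref{conj-bound-dlt-index} in dimension three. The deduction itself harbours no serious obstacle; the substantive work of this paper sits in Theorem \ref{thm-induction-index}, or equivalently in its constituent Theorems \ref{thm-lc-index-bounded}, \ref{thm-klt-index-induction} and \ref{thm-1.9}, and the dimension-three case here is the specialisation where the three ingredients become unconditional.
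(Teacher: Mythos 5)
Your route coincides with the paper's own derivation of this theorem: the paper proves Theorem \ref{thm-induction-index} and then deduces Theorem \ref{thm-lc-index-dim-3-4} exactly as you propose, by taking $d=3$ so that Conjecture \ref{conj-absolute-index} is needed only in dimension $\leq 3$ and Conjecture \ref{conj_bounded_bir_auto} only in dimension $d-2=1$. (The paper also records an independent direct proof, running an MMP on $K_X+B-\epsilon B$ and analysing the resulting Mori fibre space over a point, a curve or a surface, but your deduction matches the corollary-style proof given after Theorem \ref{thm-induction-index}.)

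Two caveats about how you verify the hypotheses. First, your justification of Conjecture \ref{conj_bounded_bir_auto} in dimension one is not valid as written: from $h^0(X,m(K_X+B))=1$ you get that $\rho_m(\Bir(X,B))$ is a finite subgroup of the multiplicative group, hence cyclic, but finiteness for each individual pair (the content of \cite{Fuj1}, \cite{Fujino-Gongyo}) does not produce a bound $N$ depending only on $(n,d)$ --- uniformity is precisely what distinguishes Conjecture \ref{conj_bounded_bir_auto} from the known finiteness, as the remark following that conjecture emphasises. The dimension-one case is nevertheless true, which is all the paper asserts and uses; but to prove it one must actually bound the order of the eigenvalue, e.g. for an elliptic curve with $B=0$ the linear part of the automorphism group has order at most $6$, and for $(\mathbb{P}^1,B)$ with $\deg B=2$ and $nB$ integral the eigenvalue is a root of unity whose order is bounded in terms of $n$; either supply this computation or cite it, rather than deducing it from finiteness. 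Second, attributing the $d=3$ case of Conjecture \ref{conj-absolute-index} to Kawamata--Morrison covers only the terminal/canonical case; for klt threefolds with $K_X\sim_\Q 0$, which is the statement Theorem \ref{thm-induction-index} actually requires, the paper relies on the recent theorem of Jiang (\cite{jiang1}, quoted in the preliminaries).
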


We also have the following partial results in dimension $4$.
\begin{thm}\label{thm-index-4}
	Conjecture \ref{conj-bound-dlt-index} hold in dimension $d=4$ if $(X,B)$ is klt and $B\neq 0$
\end{thm}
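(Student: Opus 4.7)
The plan is to identify the statement as a special case of Conjecture \ref{conj-klt-index} in dimension $4$ and then invoke the inductive Theorem \ref{thm-klt-index-induction} to reduce to a known classical result. First, by the remark following Conjecture \ref{conj-bound-dlt-index} (which appeals to \cite{HMX}), I would assume that the coefficients of $B$ lie in the finite rational set $\mathfrak{R}$ itself rather than in the hyperstandard set $\Phi(\mathfrak{R})$. Under the hypotheses of Theorem \ref{thm-index-4}, the pair $(X,B)$ is then klt log Calabi--Yau of dimension $4$ with $B$ a nonzero effective $\Q$-divisor whose coefficients lie in $\mathfrak{R}$, and $K_X+B\sim_\Q 0$. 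This is exactly the setup of Conjecture \ref{conj-klt-index} in dimension $4$.

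Next, I would apply Theorem \ref{thm-klt-index-induction} with $d=3$: that theorem asserts that Conjecture \ref{conj-absolute-index} in dimension $\leq 3$ implies Conjecture \ref{conj-klt-index} in dimension $\leq 4$. Thus, the whole problem reduces to verifying Conjecture \ref{conj-absolute-index} in dimensions $1$, $2$ and $3$, i.e.\ to exhibiting a universal bound $n=n(\dim X)$ such that $nK_X\sim 0$ whenever $X$ is klt with $K_X\sim_\Q 0$ and $\dim X\leq 3$.

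This last ingredient is an already established result: in dimension $1$ it is trivial, in dimension $2$ it follows from the classification of klt surfaces with numerically trivial canonical class (one can take $n$ divisible by $12$ or $66$), and in dimension $3$ it is a theorem of Kawamata on boundedness of the index for klt Calabi--Yau threefolds. Combining these inputs through Theorem \ref{thm-klt-index-induction} yields Theorem \ref{thm-index-4}.

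The only substantive obstacle is the dimension $3$ absolute index input, which is external to the paper; the novel content of this theorem is precisely that the inductive scheme of Theorem \ref{thm-klt-index-induction} lets one bypass the open Conjecture \ref{conj-absolute-index} in dimension $4$ as soon as $B\neq 0$, because the presence of boundary allows an adjunction argument to drop the dimension by one before the absolute index conjecture is invoked.
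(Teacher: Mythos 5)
Your reduction is essentially the paper's own (second) proof of Theorem \ref{thm-index-4}: after the \cite{HMX} reduction to finite rational coefficients, the statement is Conjecture \ref{conj-klt-index} in dimension $4$, and Theorem \ref{thm-klt-index-induction} with $d=3$ reduces it to Conjecture \ref{conj-absolute-index} in dimension $\leq 3$ (the paper phrases this as ``follows from Theorem \ref{thm-lc-index-dim-3-4} and Theorem \ref{thm-klt-index-induction}''). The one point to correct is the attribution of the dimension~$3$ input: Kawamata's theorem bounds the index only for terminal (or canonical) Calabi--Yau threefolds, whereas Conjecture \ref{conj-absolute-index} requires the \emph{klt} case, which is the recent result of Jiang cited in the paper (\cite{jiang1}, Theorem 1.7, equivalent via plt blow-up to ACC of mlds near $1$); without the klt version the reduction does not close, so the citation should be to \cite{jiang1} rather than to Kawamata. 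Note also that the paper additionally gives a self-contained direct proof via an MMP on $K_X$, a Mori fiber space $X\to V$, and a case analysis on $\dim V$ using the canonical bundle formula, boundedness of complements, and \cite{B-18}, which your argument bypasses.
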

Finally, we finish with a remark on the treatment of Conjecture \ref{conj-absolute-index} and \ref{conj_bounded_bir_auto}.
\begin{rem}
	Currently Conjecture \ref{conj_bounded_bir_auto} hold only in dimension 1. It is related to boundedness of betti numbers for klt log Calabi-Yau pairs (see \cite{Fujino-Gongyo}). This problem seems currently out of reach using methods only in Birational Geometry. \\\\Conjecture \ref{conj-absolute-index} is well-known for dimension 1 and 2. There are 2 cases for this conjecture. \begin{itemize}
		\item One case is when $X$ has terminal singularities and it is well-known that Conjecture \ref{conj-absolute-index} hold when dimension $d\leq 3$ with $X$ terminal. The case when $X$ is terminal and $\dim X\geq 4$ is open. It is expected that analytic methods are more suitable to tackle this problem.
	\item The other case is when $X$ is not terminal. This case is equivalent to  he ACC of mld near 1 for klt log Calabi-Yau pairs by passing to a plt blowup. This case is well-known for X is a surface. It has recently been claimed in \cite{jiang1} that this holds in dimension 3.
\end{itemize}
\end{rem}

The structure of this paper is as following. In section 2, we will introduce the basic definitions and preliminaries. In section 3 we will prove the main results. 
\\\\
I would like to thank Professor Birkar for proposing this problem and answering numerous questions from me. This work owes a great deal of his influence.

\newpage
\section{Preliminary}

All the varieties in this paper are quasi-projective over a fixed algebraically closed field of characteristic zero and a divisor means an $\R$-divisor
unless stated otherwise. The set of natural numbers $\N$ is the set of positive integers.

\subsection{Divisors}
Let $X$ be a normal variety, and let $M$ be an $\R$-divisor on $X$. 
We denote the coefficient of a prime divisor $D$ in $M$ by $\mu_DM$. If every non-zero coefficient of 
$M$ belongs to a set $\Phi\subseteq \R$, we write $M\in \Phi$. Writing $M=\sum m_iM_i$ where 
$M_i$ are the distinct irreducible components, the notation $M^{\ge a}$ means 
$\sum_{m_i\ge a} m_iM_i$, that is, we ignore the components with coefficient $<a$. One similarly defines $M^{\le a}, M^{>a}$, and $M^{<a}$.

Now let $f\colon X\to Z$ be a morphism to a normal variety. We say $M$ is \emph{horizontal} over $Z$ 
if the induced map $\Supp M\to Z$ is dominant, otherwise we say $M$ is \emph{vertical} over $Z$.

\subsection{Pairs}
A \emph{sub-pair} $(X,B)$ consists of a normal quasi-projective variety $X$ and an $\R$-divisor 
$B$ such that $K_X+B$ is $\R$-Cartier. 
If the coefficients of $B$ are at most $1$ we say $B$ is a 
\emph{sub-boundary}, and if in addition $B\ge 0$, 
we say $B$ is a \emph{boundary}. A sub-pair $(X,B)$ is called a \emph{pair} if $B\ge 0$.

Let $\phi\colon W\to X$ be a log resolution of a sub-pair $(X,B)$. Let $K_W+B_W$ be the 
pulback of $K_X+B$. The \emph{log discrepancy} of a prime divisor $D$ on $W$ with respect to $(X,B)$ 
is $1-\mu_DB_W$ and it is denoted by $a(D,X,B)$.
We say $(X,B)$ is \emph{sub-lc} (resp. \emph{sub-klt})(resp. \emph{sub-$\epsilon$-lc}) 
if $a(D,X,B)$ $\ge 0$ (resp. $>0$)(resp. $\ge \epsilon$) for every $D$. When $(X,B)$ 
is a pair we remove the sub and say the pair is lc, etc. Note that if $(X,B)$ is a lc pair, then 
the coefficients of $B$ necessarily belong to $[0,1]$. Also if $(X,B)$ is $\epsilon$-lc, then 
automatically $\epsilon\le 1$ because $a(D,X,B)=1$ for most $D$.

\subsection{Generalized polarised pairs}\label{ss-gpp}
For the basic theory of generalized polarised pairs see [\cite{BZh}, Section 4].
Below we recall some of the main notions and discuss some basic properties.\\

A \emph{generalized polarised pair} consists of 
\begin{itemize}
	\item a normal variety $X'$ equipped with a projective
	morphism $X'\to Z$, 
	
	\item an $\R$-divisor $B'\ge 0$ on $X'$, and 
	
	\item a b-$\R$-Cartier  b-divisor over $X'$ represented 
	by some projective birational morphism $X \overset{\phi}\to X'$ and $\R$-Cartier divisor
	$M$ on $X$
\end{itemize}
such that $M$ is nef$/Z$ and $K_{X'}+B'+M'$ is $\R$-Cartier,
where $M' := \phi_*M$. We can define singularities similarly for generalised pair. For precise definition and standard facts about generalised pair, see \cite{B-Fano}.

\subsection{Canonical bundle formula}
An algebraic fiber space $f:X \rightarrow Z$ with a given log canonical divisor $K_X+B$ which is $\Q$-linearly trivial over $Z$ is called an \emph{lc-trivial fibration}. The following canonical bundle formula for lc-trivial fibration from [\cite{Amb}] or [\cite{Fujino-Gongyo}] will be used.  

\begin{thm}[\text{[\cite{Fujino-Gongyo}, Theorem 1.1], [\cite{Amb}, Theorem 3.3]}]\label{c-bdle-form}
	Let $f:(X,B) \rightarrow Z$ be a projective morphism from a log pair to a normal variety $Z$ with connected fibers, $B$ be a $\mathbb{Q}$-boundary divisor. Assume that $(X,B)$ is sub-lc and it is lc on the generic fiber and $K_X+B \sim_\Q 0/Z$. Then there exists a boundary $\mathbb{Q}$-divisor $B_Z$ and a $\mathbb{Q}$-divisor $M_Z$ on $Z$ satisfying the following properties.  
	
	(i). $(Z,B_Z+M_Z)$ is a generalized pair, 
	
	(ii). $M_Z$ is b-nef.
	
	(iii). $K_X+B \sim_\mathbb{Q} f^\ast (K_Z+B_Z+M_Z)$.
\end{thm}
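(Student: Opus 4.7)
The plan is to follow the by-now standard construction of the canonical bundle formula, originating with Kawamata and Ambro and refined by Fujino--Mori and Fujino--Gongyo. The divisor $B_Z$ will be the discriminant part of $f$, and $M_Z$ will be the moduli part, defined as the unique class making the formula in (iii) hold.

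\textbf{Step 1 (Discriminant).} For each prime divisor $P\subset Z$, define the log canonical threshold
\[
t_P = \sup\{\,t\in\Q : (X, B + t\,f^*P) \text{ is sub-lc over the generic point of } P\,\},
\]
which is finite because $(X,B)$ is sub-lc on the generic fiber, and set $b_P = 1 - t_P$. Define
\[
B_Z := \sum_P b_P P.
\]
Only finitely many $P$ contribute, namely those lying under the non-klt locus of an appropriate log resolution, so $B_Z$ is a $\Q$-divisor; by the sub-lc assumption $t_P \ge 0$, hence $b_P \le 1$, and effectivity of the relevant components follows from the fact that $B$ is a boundary and $f$ has connected fibers.

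\textbf{Step 2 (Moduli part).} After passing to a log resolution $\phi:\widetilde X\to X$ and a birational model $Z'\to Z$ on which $f$ admits a log-smooth model $\widetilde f:\widetilde X\to Z'$, pull back to obtain $K_{\widetilde X}+\widetilde B = \phi^*(K_X+B)$. Because $K_{\widetilde X}+\widetilde B\sim_\Q 0/Z'$, one can write
\[
K_{\widetilde X}+\widetilde B \sim_\Q \widetilde f^*\bigl(K_{Z'} + B_{Z'} + M_{Z'}\bigr)
\]
where $B_{Z'}$ is the discriminant computed on $Z'$ and $M_{Z'}$ is defined by this relation; pushing down to $Z$ gives the desired $M_Z$ satisfying (iii) by construction. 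That the class of $M_{Z'}$ is independent of choices (and thus defines a b-divisor $M$ on $Z$) follows from the negativity lemma applied to two competing models.

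\textbf{Step 3 (b-nefness of $M$).} This is the main obstacle. One reduces, via base change and Abramovich--Karu semistable reduction, to the situation where $\widetilde f$ is semistable in codimension one and the horizontal part of $\widetilde B$ has standard shape. Then the moduli $\Q$-divisor $M_{Z'}$ is identified, up to numerical equivalence, with the first Chern class of the bottom Hodge bundle of the variation of Hodge structures on $R^i\widetilde f_*\Q$ attached to the smooth part of $\widetilde f$ (suitably twisted by the horizontal boundary). By the Fujita--Kawamata semipositivity theorem, this bundle is nef on $Z'$. One checks that the construction is compatible with further birational modifications $Z''\to Z'$: the moduli $\Q$-divisor $M_{Z''}$ is the pullback of $M_{Z'}$, so $M$ is a genuine b-nef $\Q$-Cartier b-divisor.

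\textbf{Step 4 (Generalized pair structure).} Combining (i)--(iii): $K_Z + B_Z + M_Z$ is $\R$-Cartier because the pushforward of $K_X+B$ is, $B_Z\ge 0$, and the trace $M_Z = \phi_* M_{Z'}$ has a b-nef representative by Step~3. Hence $(Z, B_Z + M_Z)$ is a generalized polarized pair in the sense of \S\ref{ss-gpp}, completing the proof.

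The principal difficulty throughout is Step~3: extracting nefness of $M$ from Hodge theory and showing it is preserved under birational modifications of the base. The rest (Steps 1, 2, 4) is essentially formal manipulation with log discrepancies and the negativity lemma.
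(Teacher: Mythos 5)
The paper itself does not prove this statement: it is imported as a known theorem, with the proof deferred to \cite{Amb} (Theorem 3.3) and \cite{Fujino-Gongyo} (Theorem 1.1). Your outline reproduces the standard strategy of those references — discriminant via log canonical thresholds over codimension-one points of $Z$, moduli part defined by the relative trivialization and shown well defined by negativity, b-nefness via semistable reduction and Hodge-theoretic semipositivity — so as a roadmap it agrees with the cited sources, and Steps 1, 2 and 4 are essentially correct formal manipulations.

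As an actual proof, however, Step 3 is where all the content lies, and as written it has two genuine gaps. First, identifying the moduli divisor with the first Chern class of (the canonical extension of) the bottom Hodge bundle of a VHS obtained from a cyclic cover, and then applying Fujita--Kawamata semipositivity, is the argument for \emph{klt}-trivial fibrations; the theorem here only assumes $(X,B)$ is lc on the generic fiber, and horizontal lc centres are exactly what forces Fujino--Gongyo to use semipositivity for variations of \emph{mixed} Hodge structure (Fujino--Fujisawa), or an equivalent reduction — this case is not covered by your argument. Second, the claim that for every higher model $Z''\to Z'$ the moduli divisor $M_{Z''}$ is the pullback of $M_{Z'}$ is not a routine compatibility check: it is precisely the nontrivial assertion that the moduli b-divisor descends to a model on which it is nef, and it requires choosing $Z'$ (after a base change making local monodromies unipotent) so that the relevant Hodge bundle extends canonically and the discriminant b-divisor stabilizes; merely taking a log smooth model of $f$ does not guarantee it. These two points are what the cited papers are devoted to, so if this is meant as a self-contained proof rather than a pointer to the literature, they must be supplied.
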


Under the above notations, $B_Z$ is called the \emph{discriminant part} and $M_Z$ is the \emph{moduli part}. 

\subsection{Slc Pairs}
We will introduce the basic definition here. More details of slc and sdlt pairs will be introduced in more details later. First we introduce demi normal schemes as in \cite{Kol}. A \emph{demi-normal scheme} is a scheme that is S2 and normal crossing in codimensional 1. Let $\Delta$ be an effective
$\Q$-divisor whose support does not contain any irreducible components of the conductor of X.
The pair $(X, \Delta)$ is called a \emph{semi-log canonical pair} (an \emph{slc pair}, for short) if
\begin{itemize}
	\item $K_X + \Delta$ is $\Q$-Cartier, and
	
	\item $(X', \Theta)$ is log canonical, where $\pi : X' \rightarrow X$ is the normalization and $K_{X'}+ \Theta = \pi^*(K_X +\Delta)$.
\end{itemize}

Similarly one defines a \emph{semi-divisorial log terminal pair} (an \emph{sdlt pair}, for short). An slc pair $(X,B)$ is said to be \emph{sdlt} if the normalisation $(X',\Theta)$ is dlt in the usual sense and $\pi: X_i'\rightarrow X_i$ is isomorphism. Note that here we are using the definition in \cite{Fuj1} instead of \cite{Kol}. This is fine since we will only use semi dlt in the following setting. We remark that if $(X,B)$ is a usual dlt pair, then $(\rddown{B},\Diff(B-\rddown{B}))$ is semi-dlt. Also for sdlt pair $(X,B)$ it is clear that $(K_X+B)|_{X_i} = K_{X_i}+\Theta_i$, where $\Theta_i := \Theta|_{X_i}$. We also have the following lemma/example.
\begin{lem}[\cite{Fujino-Gongyo-1}, Example 2.6]\label{lem-lt-slc-adjunction}
	Let $(X,B)$ be $\Q$-factorial lc with $B$ a $\Q$-divisor. Let $S := \rddown{B}$ and assume  $(X,B-\epsilon S)$ is klt for $0<\epsilon<<1$. Then $(S, \Diff(B-S))$ is slc.
\end{lem}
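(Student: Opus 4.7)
The plan is to reduce the statement to the dlt adjunction recalled in the excerpt by passing to a $\Q$-factorial dlt modification. Let $\phi\colon Y\to X$ be a $\Q$-factorial dlt modification of $(X,B)$, so that $(Y,B_Y)$ is dlt, $K_Y+B_Y=\phi^*(K_X+B)$, and $\phi_*B_Y=B$. Setting $T:=\rddown{B_Y}$, the remark preceding the lemma yields that $(T,\Diff_T(B_Y-T))$ is sdlt, hence slc. The idea is to transport this sdlt structure down to $S=\rddown{B}$ along $\phi|_T\colon T\to S$.

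The klt perturbation hypothesis $(X,B-\epsilon S)$ klt is used to control the $\phi$-exceptional prime divisors $E$ with $a(E,X,B)=0$, which are exactly the $\phi$-exceptional components of $T$. For any such $E$ the strict inequality $a(E,X,B-\epsilon S)>0$ forces $\mult_E\phi^*S>0$, so $\phi(E)\subseteq S$; being exceptional, $\phi(E)$ has codimension $\geq 2$ in $X$ and thus codimension $\geq 1$ in $S$. I would therefore decompose $T=T^h+T^e$, where $T^h$ is the sum of components mapping birationally onto components of $S$, and $T^e$ is the sum of the $\phi$-exceptional components, which contract into $S$. This yields a bijection between components of $T^h$ and those of $S$, with each component of $T^h$ normal because $(Y,B_Y)$ is dlt.

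For demi-normality of $S$, the $S_2$ condition follows from standard adjunction theory for lc pairs together with $\Q$-factoriality. For the normal-crossing-in-codimension-one condition, the perturbation hypothesis is decisive: at a codimension $1$ point of $S$ with two or more analytic branches, the klt-ness of $(X,B-\epsilon S)$ forces $X$ to be smooth there and the branches to meet transversally, which is the only codimension $1$ lc configuration surviving the klt perturbation.

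Finally, since each component of $T^h$ is normal and maps birationally onto a component of $S$, the morphism $\phi|_{T^h}\colon T^h\to S$ factors through the normalization $\nu\colon S^\nu\to S$ as a birational map $\psi\colon T^h\to S^\nu$ between normal varieties. Writing $K_{T^h}+\Theta_{T^h}=(K_Y+B_Y)|_{T^h}$ via dlt adjunction and $K_{S^\nu}+\Theta_{S^\nu}=\nu^*(K_S+\Diff(B-S))$ by definition, both sides compute restrictions of $\phi^*(K_X+B)$ pulled back to $T^h$, so $K_{T^h}+\Theta_{T^h}=\psi^*(K_{S^\nu}+\Theta_{S^\nu})$. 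Since $(T^h,\Theta_{T^h})$ is lc as the normal part of the sdlt pair, the negativity lemma applied to the difference $\Theta_{T^h}-\psi^*\Theta_{S^\nu}$, which is numerically trivial over $S^\nu$ and supported on $\psi$-exceptional divisors, yields $\psi_*\Theta_{T^h}=\Theta_{S^\nu}$ and transfers the lc property to $(S^\nu,\Theta_{S^\nu})$. Combined with the demi-normality of $S$ this is exactly the slc condition. The main obstacle lies precisely in this last step, namely verifying via the negativity lemma that the push-forward on each component contributes no extra exceptional discrepancy, so that $\Theta_{S^\nu}=\psi_*\Theta_{T^h}$ and the lc property genuinely descends.
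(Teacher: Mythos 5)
The paper itself does not prove this lemma: it is imported verbatim from [\cite{Fujino-Gongyo-1}, Example 2.6], so your argument has to stand on its own, and as written it has a genuine gap exactly where the content of that citation lies, namely the demi-normality of $S$. Your $S_2$ claim is only asserted (``follows from standard adjunction theory for lc pairs together with $\Q$-factoriality''); this is not routine, since $(X,B)$ is lc but in general not dlt, so [\cite{kollar-mori}, 5.52] does not apply directly, and $S_2$ does not descend formally from $T=\rddown{B_Y}$ along $\phi|_T$ (images of $S_2$ schemes need not be $S_2$). Worse, your justification of the normal-crossing-in-codimension-one condition rests on a false claim: klt-ness of $(X,B-\epsilon S)$ does \emph{not} force $X$ to be smooth at a codimension-one point of $S$ where two branches meet. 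The germ $(\A^2/\mu_r,\,C_1+C_2)$ (times a smooth factor), with $C_1,C_2$ the images of the coordinate axes, is $\Q$-factorial lc, becomes klt after subtracting $\epsilon(C_1+C_2)$, and $X$ is singular at the intersection point; two rulings of $\PP(1,1,2)$ through the $A_1$-point give a projective example. The conclusion you want is still true in these germs --- the image of the two axes is a node --- but the correct argument goes through the classification of two-dimensional lc germs carrying two coefficient-one boundary components (only the smooth and cyclic quotient cases occur, and both yield nodes), not through smoothness of $X$. So demi-normality, which is the real content of the cited example, is not established by your argument.

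By contrast, the step you single out as ``the main obstacle'' is actually the easy part and needs no negativity lemma: since $K_{T^h}+\Theta_{T^h}=(\phi^*(K_X+B))|_{T^h}=(\nu\circ\psi)^*\bigl((K_X+B)|_S\bigr)$ while $K_{S^\nu}+\Theta_{S^\nu}=\nu^*\bigl((K_X+B)|_S\bigr)$ by the definition of the different, the crepant relation $K_{T^h}+\Theta_{T^h}=\psi^*(K_{S^\nu}+\Theta_{S^\nu})$ is immediate, $\psi_*\Theta_{T^h}=\Theta_{S^\nu}$ follows by pushing forward, and lc-ness of $(S^\nu,\Theta_{S^\nu})$ descends directly from the dlt adjunction on the normal components $T^h$. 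So the skeleton of your reduction via a dlt modification is reasonable for the ``normalization is lc'' half, but to complete the proof you must actually establish that $S$ is $S_2$ and nodal in codimension one (either by citing the adjunction theory for lc pairs as in \cite{Fujino-Gongyo-1}/Koll\'ar's work, or via seminormality of the non-klt locus together with the surface-germ classification), rather than asserting it.
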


\subsection{Theorems on ACC}
Theorems on ACC proved in [\cite{HMX}] are crucial to the argument of the main results.
\begin{thm}[ACC for numerically trivial pairs, \text{[\cite{HMX}, Theorem D]}]\label{ACC-2}
	Fix a positive integer $n$ and a set $I \subset [0, 1]$, which satisfies the
	DCC.
	Then there is a finite set $I_0 \subset I$ with the following property: \\
	If $(X, \Delta)$ is a log canonical pair such that \\
	(i) $X$ is projective of dimension $n$, \\
	(ii) the coefficients of $\Delta$ belong to $I$, and \\
	(iii) $K_X + \Delta$ is numerically trivial, \\
	then the coefficients of $\Delta$ belong to $I_0$.	
\end{thm}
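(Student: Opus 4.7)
The plan is to run a contradiction argument exploiting two inductive inputs that are themselves theorems of Hacon--McKernan--Xu, namely the ACC for log canonical thresholds and the DCC for volumes of lc pairs. Suppose the conclusion fails. Then one can extract a sequence of lc projective pairs $(X_i,\Delta_i)$ of dimension $n$ with $K_{X_i}+\Delta_i\equiv 0$, together with a compatible labelling of the prime components of the $\Delta_i$, such that the coefficients $b_{i,j}\in I$ are \emph{strictly increasing} in $i$ for each fixed $j$ and converge to limits $b_j^\infty\in(0,1]$. After passing to a subsequence I may assume $b_{i,j}<b_j^\infty$ for all relevant $i,j$.

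Form the enlarged divisor $\Delta_i^\infty:=\sum_j b_j^\infty D_{i,j}$ on $X_i$, where $D_{i,j}$ are the chosen prime components, so $\Delta_i^\infty>\Delta_i$ strictly. Using $K_{X_i}+\Delta_i\equiv 0$ one gets
\[
K_{X_i}+\Delta_i^\infty \;\equiv\; \Delta_i^\infty-\Delta_i \;\geq\; 0,
\]
with the right hand side nonzero and effective. I would then split into two cases according as $(X_i,\Delta_i^\infty)$ is lc or not. If it is lc for infinitely many $i$, then $K_{X_i}+\Delta_i^\infty$ is effective, nonzero and hence big (passing to a log minimal model, or re-interpreting in the generalised polarised pair framework of Section~\ref{ss-gpp}); its volume is positive but tends strictly to $0$ as $b_{i,j}\nearrow b_j^\infty$, contradicting the DCC for volumes of lc pairs with DCC coefficients. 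If instead $(X_i,\Delta_i^\infty)$ is not lc for infinitely many $i$, set $t_i:=\lct(X_i,\Delta_i;\,\Delta_i^\infty-\Delta_i)\in(0,1)$; then $(X_i,\Delta_i+t_i(\Delta_i^\infty-\Delta_i))$ is lc with coefficients in a DCC set, but $t_i$ is strictly increasing with $\lim t_i=1$, producing a strictly increasing sequence of log canonical thresholds and contradicting the ACC for log canonical thresholds.

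The main obstacle is the inductive bookkeeping. Both the ACC for lct and the DCC for volumes are proved by HMX simultaneously with the present theorem in a single coupled induction on dimension, so using them as black boxes is not strictly honest; one has to verify that each case genuinely drops dimension. This is done by combining an lc MMP (to exhibit the minimal model that witnesses positivity of the volume in the first case, and to read off the non-klt locus cut out by the threshold divisor in the second) with adjunction via the canonical bundle formula of Theorem~\ref{c-bdle-form}, in order to land on a generalised polarised pair on a lower-dimensional base. Because a truly lc MMP is not directly available, one typically first takes a $\Q$-factorial dlt modification and writes the boundary as $S+B$ with $S=\lfloor\Delta\rfloor$ to reduce to the klt setting. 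The most delicate point is controlling the discriminant and moduli parts on the base so that their coefficients again lie in a DCC set, which is what allows the inductive hypothesis to close up the argument.
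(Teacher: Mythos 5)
First, note that the paper itself does not prove this statement: it is quoted verbatim from [\cite{HMX}, Theorem D], so the only meaningful comparison is with the Hacon--McKernan--Xu argument that your sketch tries to reproduce. Measured against that, there are two genuine gaps. The first is the step ``effective, nonzero and hence big''. This is false: $\Delta_i^\infty-\Delta_i$ is effective but may be vertical for some fibration (e.g.\ a sum of fibres of a morphism from $X_i$ to a curve), in which case $\vol(K_{X_i}+\Delta_i^\infty)=0$ and no strictly decreasing positive sequence of volumes is produced; passing to a log minimal model or to the generalised pair framework does not create bigness. This non-big situation is precisely the hard case in [\cite{HMX}], where one must run an MMP and use adjunction or the canonical bundle formula to land on a lower-dimensional pair and invoke the statement being proved in lower dimension; your final paragraph gestures at this, but the contradiction you actually offer (DCC of volumes) simply does not apply there, so the case analysis as stated does not close.

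Second, the lct case is mis-parametrised. With $t_i=\lct(X_i,\Delta_i;\Delta_i^\infty-\Delta_i)$ there is no reason that $t_i\to 1$: if $s_i$ denotes the largest coefficient one may give the distinguished component $D_{i,j_0}$ while keeping the pair lc, then $t_i=(s_i-b_{i,j_0})/(b_{j_0}^\infty-b_{i,j_0})$, which can be constant (say $1/2$) while $s_i\to b_{j_0}^\infty$. Moreover, ACC for log canonical thresholds requires the auxiliary divisor to have coefficients in a DCC set, whereas $\Delta_i^\infty-\Delta_i$ has coefficients $b_j^\infty-b_{i,j}$ tending to $0$, which form an ACC rather than a DCC set, so the theorem cannot be applied to your data as stated. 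The correct move is to take the threshold of the reduced divisor $D_{i,j_0}$ with respect to $(X_i,\Delta_i-b_{i,j_0}D_{i,j_0})$: the resulting numbers $s_i$ lie in $[b_{i,j_0},b_{j_0}^\infty)$, hence converge to $b_{j_0}^\infty$ from below and contradict ACC for lct. Finally, the circularity you flag is real --- ACC for lct, DCC of volumes and this global ACC are established in one coupled induction in [\cite{HMX}] --- so even after the fixes above the argument must be organised as an explicit induction on dimension rather than quoting those statements in the same dimension as black boxes; and, as a minor point, one can only arrange the coefficients to be non-decreasing in $i$ with at least one strictly increasing, not strictly increasing for every $j$.
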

\subsection{Complements}\label{ss-compl}
(1) 
We first recall the definition for usual pairs.
Let $(X,B)$ be a pair where $B$ is a boundary and let $X\to Z$ be a contraction. 
Let $T=\rddown{B}$ and $\Delta=B-T$. 
An \emph{$n$-complement} of $K_{X}+B$ over a point $z\in Z$ is of the form 
$K_{X}+{B}^+$ such that over some neighbourhood of $z$ we have the following properties:
\begin{itemize}
	\item $(X,{B}^+)$ is lc, 
	
	\item $n(K_{X}+{B}^+)\sim 0$, and 
	
	\item $n{B}^+\ge nT+\rddown{(n+1)\Delta}$.\\
\end{itemize}
(2) 
Now let $(X',B'+M')$ be a projective generalized pair with data $\phi\colon X\to X'$ 
and $M$ where $B'\in [0,1]$. 
Let $T'=\rddown{B'}$ and $\Delta'=B'-T'$. 
An \emph{$n$-complement} of $K_{X'}+B'+M'$ is of the form $K_{X'}+{B'}^++M'$ where 
\begin{itemize}
	\item $(X',{B'}^++M')$ is generalized lc,
	
	\item $n(K_{X'}+{B'}^++M')\sim 0$ and $nM$ is b-Cartier, and 
	
	\item $n{B'}^+\ge nT'+\rddown{(n+1)\Delta'}$.\\
\end{itemize}

\subsection{Fano Varieties}
Let $(X,B)$ be a pair and $X\to Z$ a contraction. We say $(X,B)$ is \emph{log Fano} over $Z$ 
if it is lc and $-(K_X+B)$ is ample over $Z$; if $B=0$ 
we just say $X$ is Fano over $Z$. The pair is called \emph{weak log Fano} over $Z$ if it is lc 
and $-(K_X+B)$ is nef and big over $Z$; 
if $B=0$ we say $X$ is \emph{weak Fano} over $Z$.
We say $X$ is \emph{of Fano type} over $Z$ if $(X,B)$ is klt weak log Fano over $Z$ for some choice of $B$;
it is easy to see this is equivalent to existence of a big$/Z$ $\Q$-boundary (resp. $\R$-boundary) 
$\Gamma$ so that $(X,\Gamma)$ is klt and $K_X+\Gamma \sim_\Q 0/Z$ (resp. $\sim_\R$ instead of $\sim_\Q$). 

We have the following theorem on complements from [Birkar, \cite{B-Fano}].

\begin{thm}[\text{[\cite{B-Fano}, Theorem 1.10]}]\label{t-bnd-compl}
	Let $d$ and $p$ be natural numbers and $\mathfrak{R}\subset [0,1]$ be a finite set of rational numbers.
	Then there exists a natural number $n$ 
	depending only on $d,p$, and $\mathfrak{R}$ satisfying the following. 
	Assume $(X',B'+M')$ is a projective generalized polarised pair with data $\phi\colon X\to X'$ and $M$ 
	such that 
	\begin{itemize}
		\item $(X',B'+M')$ is generalized lc of dimension $d$,
		
		\item $B'\in \Phi(\mathfrak{R})$ and $pM$ is b-Cartier,
		
		\item $X'$ is of Fano type, and 
		
		\item $-(K_{X'}+B'+M')$ is nef.\\
	\end{itemize}
	Then there is an $n$-complement $K_{X'}+{B'}^++M'$ of $K_{X'}+{B'}+M'$ 
	such that ${B'}^+\ge B'$. 
\end{thm}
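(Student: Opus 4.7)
The plan is to follow the inductive machinery developed by Birkar, which intertwines boundedness of complements with effective birationality and boundedness of Fano varieties. First I would normalize the setup by passing to a $\Q$-factorial dlt model for generalized pairs: since $X'$ is of Fano type, MMPs run and terminate, so replacing $X'$ by such a model preserves the hypotheses while only affecting the sought complement by push-forward. By the $\Phi(\mathfrak{R})$-condition on $B'$, I may then cleanly separate the integral part $T'=\rddown{B'}$ from the fractional part $\Delta'=B'-T'$ in preparation for adjunction.

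Next I would split into two cases. In the non-klt case, after the dlt modification there is a prime divisor $S$ with coefficient one in $B'$. Adjunction for generalized pairs (Birkar--Zhang) yields $K_S+B_S+M_S=(K_{X'}+B'+M')|_S$, where $(S,B_S+M_S)$ is generalized lc of dimension $d-1$, with $B_S\in\Phi(\mathfrak{S})$ for a finite set $\mathfrak{S}$ depending only on $\mathfrak{R}$ and $p$, and $p'M_S$ b-Cartier for $p'$ depending only on $p$ and $d$. Inductively $(S,B_S+M_S)$ admits an $n$-complement for bounded $n$. One then lifts this to $X'$: the standard short exact sequence twisting by a suitable multiple of $-(K_{X'}+\Delta'+M')-S$ has vanishing $H^1$ by Kawamata--Viehweg, because $-(K_{X'}+B'+M')$ is nef and $X'$ is of Fano type (so the relevant divisor is nef and big). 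A rounding argument on $\rddown{(n+1)\Delta'}$ then upgrades the lifted section into a genuine $n$-complement with ${B'}^+\ge B'$.

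In the klt case one must first manufacture a non-klt center and reduce to the previous case. This requires producing, with bounds depending only on $d,p,\mathfrak{R}$, an integer $n_0$ and an effective divisor $D\sim_\Q -n_0(K_{X'}+B'+M')$ such that $(X',B'+\tfrac{1}{n_0}D+M')$ becomes strictly lc at some point, without worsening the coefficients beyond a hyperstandard set. Such an $n_0$ is produced by effective birationality of the anticanonical systems of Fano pairs, which in turn rests on an intertwined induction using the conclusion of the theorem in dimension $d-1$, together with Birkar's boundedness of Fano varieties (BAB). The main obstacle is precisely this klt case: the dlt reduction, adjunction for generalized pairs, and Kawamata--Viehweg lifting are comparatively standard once set up, but the effective birationality step is the technical heart of the argument, and the induction must be threaded carefully so that the bounds obtained in dimension $d-1$ feed exactly into the constants needed in dimension $d$.
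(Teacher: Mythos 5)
Theorem \ref{t-bnd-compl} is not proved in this paper at all: it is quoted verbatim from [\cite{B-Fano}, Theorem 1.10] and used as a black box, so there is no proof here to compare yours against. Judged on its own terms, your outline reproduces the broad architecture of Birkar's argument (dlt/$\Q$-factorial reduction on a Fano type variety, generalized adjunction to a coefficient-one divisor with hyperstandard coefficients controlled by $d,p,\mathfrak{R}$, lifting of a complement from dimension $d-1$ via vanishing, and creation of non-klt centers in the klt case through an induction intertwined with effective birationality), but as a proof it has genuine gaps beyond merely citing the hard ingredients.

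The most concrete one is in your lifting step: you claim $H^1$-vanishing because ``$-(K_{X'}+B'+M')$ is nef and $X'$ is of Fano type (so the relevant divisor is nef and big).'' Fano type gives bigness of $-K_{X'}$, not of $-(K_{X'}+B'+M')$, and the divisor you need nef and big for Kawamata--Viehweg is essentially a multiple of $-(K_{X'}+B'+M')$ corrected by a klt boundary. When $-(K_{X'}+B'+M')$ is nef but not big --- in particular when $K_{X'}+B'+M'\equiv 0$, which is exactly the situation in which the present paper invokes the theorem --- this bigness fails and your exact-sequence argument collapses. In Birkar's proof this is a separate case: one uses that the nef divisor is semiample on a Fano type variety, passes to the induced contraction, applies the canonical bundle formula/generalized adjunction to the base, and inducts on dimension there; none of this appears in your sketch. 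A second gap is the klt case: it is not always possible to produce, with bounded $n_0$ and controlled coefficients, a strictly lc structure --- the ``exceptional'' pairs for which this fails must be handled by an entirely different mechanism (boundedness of exceptional pairs, resting on effective birationality and BAB-type results), and this is a substantial portion of Birkar's proof rather than a remark. Finally, effective birationality itself, which you correctly identify as the technical heart, is only named; so the proposal is a roughly faithful roadmap of \cite{B-Fano}, not a proof.
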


We also have the following theorem by Birkar on fano type fibrations that we will need in this paper. 
\begin{defn}[\cite{B-18}, Def 1.1]
Let $d,r$ be natural numbers and $\epsilon$ be a positive real number. A $(d,r,\epsilon)$ Fano type (log Calabi-Yau) fibration consists of a pair $(X,B)$ and a contraction $f:X\rightarrow Z$ such that we have the following.
\begin{itemize}
	\item $(X,B)$ projective $\epsilon$-lc of dimension $d$,
	\item $K_X+B\sim_\R f*L$ for some $\R$ divisor L,
	\item $-K_X$ is big over $Z$, i.e. $X$ is fano type over $Z$,
	\item A is a very ample divisor on Z with $A^{\dim Z}\leq r$,
	\item $A-L$ is ample
\end{itemize}
\end{defn}
\begin{thm}[\cite{B-18} Thm 1.2, Thm 1.3 ]\label{thm-B-bounded-CY}
	Let $d,r$ be natural numbers and $\epsilon,\delta$ be a positive real number. Consider the set of all $(d,r,\epsilon)$-Fano type fibration as above. Then the X form a bounded family. In particular if $B\geq \delta$, then the set of such $(X,B)$ form a log bounded family.
\end{thm}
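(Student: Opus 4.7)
The goal is to produce, uniformly in $d,r,\epsilon$, a very ample Cartier divisor $H$ on each $X$ appearing in a $(d,r,\epsilon)$-Fano type fibration with $H^d$ (and, when $B\ge\delta$, also $(K_X+B)\cdot H^{d-1}$) bounded from above; such $X$ (resp.\ $(X,B)$) then lives in finitely many components of a suitable Hilbert scheme, giving (log) boundedness. The plan follows Birkar's strategy in \cite{B-18}: combine boundedness of the base with fiberwise boundedness of complements, and glue via the canonical bundle formula.

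First I would dispose of the base. Since $A$ is very ample on $Z$ with $A^{\dim Z}\le r$, the variety $Z$ itself belongs to a bounded family: its Hilbert polynomial with respect to $A$ is uniformly bounded, so $|A|$ embeds $Z$ into a projective space of bounded dimension. Next I would handle the generic fiber. The restriction $(F,B_F)$ of $(X,B)$ to the geometric generic fiber of $f$ is an $\epsilon$-lc log Calabi--Yau pair with $-K_F$ big, so $F$ is of Fano type in dimension $\le d$. Theorem \ref{ACC-2} forces the coefficients of $B_F$ into a finite set $\mathfrak{R}$ depending only on $d$ and $\epsilon$, and then Theorem \ref{t-bnd-compl} furnishes a uniform $n=n(d,\epsilon)$ together with an $n$-complement $K_F+B_F^+\ge K_F+B_F$ with $(F,B_F^+)$ lc. Standard lifting of complements promotes this to an $n$-complement $K_X+B^+\ge K_X+B$ over the generic point of $Z$, so that $n(K_X+B^+)\sim f^*D$ for some Cartier divisor $D$ on $Z$.

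The crucial step is globalization on $X$. Applying the canonical bundle formula (Theorem \ref{c-bdle-form}) to the lc-trivial fibration $(X,B^+)\to Z$ yields a generalized polarized pair $(Z,B_Z^++M_Z)$ with $K_X+B^+\sim_\Q f^*(K_Z+B_Z^++M_Z)$. Because $K_X+B\sim_\R f^*L$ with $A-L$ ample, the ACC for generalized pairs bounds the coefficients of $B_Z^+$ and the Cartier index of $nM$; combined with the bounded embedding of $Z$ via $A$, this makes $(Z,B_Z^++M_Z)$ a bounded generalized pair. I would then set $H$ to be a uniform multiple of $f^*A$ twisted by a member of $|n(K_X+B^+)|$: this is nef and big, and its volume is bounded by the product of a bounded base volume ($A^{\dim Z}\le r$) and a bounded fiber volume (from the bounded family of fibers $F$). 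A further uniform multiple becomes very ample with $H^d$ bounded. The hypothesis $B\ge\delta$ together with $B^+\ge B$ then bounds $(K_X+B)\cdot H^{d-1}$, yielding log boundedness of $(X,B)$.

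The principal obstacle is the globalization: extending the fiberwise complement to a complement over all of $Z$ while maintaining lc (and effectively $\epsilon'$-lc) singularities requires effective b-semi-ampleness, or at least uniform boundedness of the Cartier index of the moduli b-divisor $M_Z$. This rests on the theory of generalized pairs and the effective adjunction developed in \cite{B-Fano}, and is the technically deepest ingredient. The $\epsilon$-lc hypothesis is essential throughout to prevent singular degenerations in limiting families, and the explicit bound $A^{\dim Z}\le r$ is what makes the base uniformly embeddable in projective space; once these two ingredients are combined with bounded complements, the final Hilbert-scheme assembly is a routine matter of standard boundedness criteria.
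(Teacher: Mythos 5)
This statement is not proved in the paper at all: it is quoted verbatim from Birkar's work \cite{B-18} (Theorems 1.2 and 1.3), so there is no internal proof to compare against, and any genuine proof would have to reproduce a substantial part of that paper. Your sketch follows the broad outline of Birkar's strategy (bound the base via $A$, bound the general fibers, use complements and the canonical bundle formula), but as it stands it has concrete gaps that go beyond the one you flag.

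The central one is the construction of the polarization on $X$. You take $H$ to be a multiple of $f^*A$ ``twisted by a member of $|n(K_X+B^+)|$'' and assert it is nef and big. But by construction $n(K_X+B^+)\sim f^*D$ for a divisor $D$ on $Z$, so any such member is numerically a pullback from $Z$; hence $H$ is numerically trivial on the fibers of $f$ and cannot be big on $X$ when $\dim Z<d$. Producing a genuinely ample divisor on the total space with uniformly bounded top self-intersection is exactly the hard content of \cite{B-18}: there one uses that $-K_X$ is big over $Z$, runs MMPs, and controls singularities and multiplicities of fibrations (lc thresholds over the base, bounded toroidalization, induction on dimension), none of which is replaced by your Hilbert-scheme assembly. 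A second gap: the definition of a $(d,r,\epsilon)$-Fano type fibration imposes no DCC condition on the coefficients of $B$, so Theorem \ref{ACC-2} does not force the coefficients of $B_F$ into a finite set; and even granting a fiberwise $n$-complement, extending it to a bounded complement over all of $Z$ with control of the moduli part (effective b-semiampleness, or a bounded Cartier index for $M_Z$) is precisely the deep point you defer to, not a step you can cite as available in this generality. So the proposal is a plausible roadmap of the cited theorem's strategy rather than a proof, and its key globalization step as written fails.
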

\subsection{Calabi-Yau pairs}
Let $(X,B)$ be a pair, we say $(X,B)$ is log Calabi-Yau if $K_X+B\sim_\Q 0$ and $(X,B)$ is log canonical. the global index of $(X,B)$ is the minimal $n$ such that $n(K_X+B)\sim 0$. We have the following recent result in \cite{jiang1} about index for klt Calabi-Yau varieties.
\begin{thm}[\cite{jiang1}, Theorem 1.7]
	There exits $m\in \N$ such that if $X$ is projective klt variety of dimension 3 and $K_X\sim_\Q0$, then $mK_X\sim 0$.
	
\end{thm}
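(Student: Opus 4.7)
My plan is to reduce the klt case to the terminal Gorenstein case via crepant maps and an index-one cover, and then to control the torsion order of the canonical bundle using a Beauville--Bogomolov type decomposition in the singular setting. First, pass to a $\Q$-factorial terminalization $\pi : X' \to X$: running an appropriate MMP from a $\Q$-factorial dlt modification of $X$ yields a crepant birational contraction from a $\Q$-factorial terminal threefold $X'$ with $K_{X'} = \pi^* K_X \sim_\Q 0$. Since $\pi$ is birational and crepant, $\pi_* \omega_{X'}^{[n]} = \omega_X^{[n]}$ for every $n \in \N$, so any uniform bound $n K_{X'} \sim 0$ descends to $n K_X \sim 0$; we may therefore replace $X$ by $X'$ and assume $X$ is $\Q$-factorial terminal.

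Next I peel off the local Cartier index. By the classification of terminal threefold singularities (Reid, Mori, Koll\'{a}r--Shepherd-Barron), the local Gorenstein indices are bounded by an absolute constant $r_0$. Taking the global index-one cover $Y \to X$, which is cyclic of degree dividing $r_0$ and quasi-\'{e}tale, one obtains a projective terminal threefold $Y$ with $K_Y$ Cartier and $K_Y \sim_\Q 0$. It therefore suffices to bound, uniformly in $Y$, the order of $\omega_Y$ as a torsion element of $\Pic(Y)$.

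For such a terminal Gorenstein Calabi--Yau threefold $Y$ I would invoke the Beauville--Bogomolov decomposition theorem extended to the singular terminal setting (Greb--Kebekus--Peternell and subsequent refinements): there exists a quasi-\'{e}tale cover $Z \to Y$ such that $Z$ splits as a product of factors drawn from \{abelian variety, irreducible hyperk\"{a}hler, irreducible strict Calabi--Yau\}. In dimension three the only options for $Z$ are an abelian threefold, the product of an elliptic curve with a K3 surface, or a strict simply connected Calabi--Yau threefold, and in every case $\omega_Z \cong \mathcal{O}_Z$. Chasing through the tower $Z \to Y \to X$ then yields a uniform $m$ with $mK_X \sim 0$, provided that the degree of $Z \to Y$ can be bounded uniformly.

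The main obstacle is precisely this last point --- controlling the degree of the quasi-\'{e}tale splitting cover, equivalently bounding the algebraic fundamental group of the smooth locus $Y_{\mathrm{smooth}}$ uniformly. In the smooth case the cover is finite with degree controlled in a fixed topological type, but in the terminal singular category one must combine Greb--Kebekus--Peternell with uniform control on local fundamental groups of terminal threefold singularities. In dimension three, the latter is essentially equivalent to the ACC for minimal log discrepancies near $1$ for threefold log Calabi--Yau pairs --- exactly the statement flagged in the remark preceding the theorem, and the technical heart of Jiang's argument.
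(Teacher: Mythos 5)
First, a point of comparison: the paper does not prove this statement at all --- it is quoted from \cite{jiang1} (Jiang, Theorem 1.7) and used as a black box, so there is no internal proof to measure your argument against; it has to stand on its own, and as written it does not. Your first reduction (crepant terminalization, descent of the index) is fine, but the next step contains two genuine errors. The claim that ``the local Gorenstein indices of terminal threefold singularities are bounded by an absolute constant $r_0$'' is false: the cyclic quotient points $\frac{1}{r}(1,-1,a)$ with $\gcd(a,r)=1$ are terminal for every $r$, so local indices are unbounded in the classification. In the terminal Calabi--Yau case the boundedness of the basket is a \emph{global} statement, extracted from Reid's orbifold Riemann--Roch together with Miyaoka's semipositivity of $c_2$ (with $K_X\equiv 0$ one gets $\sum_Q (r_Q-1/r_Q)=24\,\chi(\mathcal{O}_X)$, which bounds the $r_Q$); this is precisely the Kawamata--Morrison argument, and your outline never invokes it. Worse, the ``global index-one cover'' step is circular: the cyclic cover trivializing $K_X$ has degree equal to the global index of $K_X$, i.e.\ the very quantity to be bounded, and the global index is not controlled by local indices at all --- a smooth threefold with torsion canonical class has every local index equal to $1$.

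Even setting those aside, the part you honestly flag as the obstacle is the actual mathematical content: nothing in your argument bounds the degree of the quasi-\'etale Beauville--Bogomolov splitting cover (equivalently the relevant fundamental groups), so no uniform $m$ comes out. Jiang's route is different from the one you sketch: the terminal/canonical case is handled by the Riemann--Roch argument above, while in the strictly klt non-canonical case he proves a gap theorem for minimal log discrepancies in dimension three ($\mld \le 1-\delta_0$ for an absolute $\delta_0$), extracts a divisor computing it to produce a klt Calabi--Yau \emph{pair} whose boundary coefficient lies in a fixed set bounded away from $0$, and then concludes by boundedness/ACC-type results for such pairs --- this is what the remark in the introduction means by saying the non-terminal case ``is equivalent to the ACC of mld near $1$.'' So your proposal needs the Riemann--Roch input replacing the false local-index bound, a non-circular substitute for the index-one cover, and a genuine bound on the splitting cover degree; without these it is a plan, not a proof.
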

\subsection{B-Birational Maps and B-pluricanonical Representations}
We intoduce the notion of \emph{B-birational} as in \cite{Fuj1}. Let $(X,B)$, $(X',B')$ be  sub pairs, we say $f: (X,B)\dashrightarrow (X',B')$ is \emph{B-birational} if there is a common resolution $\alpha: (Y,B_Y)\rightarrow (X,B)$, $\beta:(Y,B_Y)\rightarrow (X,B)$ such that $K_Y+B_Y=\alpha^*(K_X+B)=\beta^*(K_{X'}+B')$ and a commuting diagram as the following.
\[
\begin{tikzcd}
&(Y,B_Y)\arrow[ld,"\alpha"'] \arrow[rd,"\beta"]&\\
(X,B)\arrow[rr,dashed,"f"]&&(X',B')
\end{tikzcd}
\] Let $$\Bir(X,B):=\{f|f:(X,B)\dashrightarrow (X,B) \text{\;is B-birational}\}$$ Let $n$ be a positive integer such that  $n(K_X+B)$ is Cartier. Then we define $$\rho_n: \Bir(X,B)\rightarrow Aut(H^0(X,n(K_X+B)))$$ be the representaion of the natural action of $\Bir(X,B)$ acting on $H^0(X,n(K_X+B))$ by pullbacking back sections.

We have the following theorem on B-birational representations.
\begin{prop} [\cite{Fujino-Gongyo-1} ]\label{prop-finiteness-bir-representation}
	Let $(X,B)$ be a projective (not necessarily connected) dlt pair with $n(K_X+B)\sim 0$ and $n$ is even, then $\rho_n(\Bir(X,B)) $ is finite. \qed
\end{prop}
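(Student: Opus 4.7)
The plan is to reduce the proposition to a unitarity statement about a one--dimensional character, and then to algebraicity. First I would handle the possibly disconnected $X$: write $X=\bigsqcup_{i=1}^r X_i$ for the decomposition into connected components and put $B_i=B|_{X_i}$. Any $f\in \Bir(X,B)$ permutes the $X_i$, giving a homomorphism $\sigma\colon \Bir(X,B)\to S_r$ whose kernel $G_0$ consists of B-birational maps preserving each component. Since $S_r$ is finite and $\rho_n|_{G_0}$ decomposes as a product of the analogous representations on each $(X_i,B_i)$, it suffices to prove finiteness of $\rho_n$ when $X$ is connected. In that case $n(K_X+B)\sim 0$ gives $H^0(X,n(K_X+B))\cong H^0(X,\mathcal{O}_X)=\mathbb{C}$, so fixing a nonzero section $s$ turns $\rho_n$ into a character $\chi\colon \Bir(X,B)\to \mathbb{C}^*$ defined by $f^*s=\chi(f)s$. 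The problem is thus to show that the image of $\chi$ is a finite subgroup of $\mathbb{C}^*$.

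The heart of the matter is the \emph{unitarity} step: show $|\chi(f)|=1$ for every $f$. Because $n$ is even, the datum $s\otimes \bar s$ defines unambiguously a positive measure $|s|^{2/n}$ on the log-smooth locus of $(X,B)$; a local computation on a log resolution reduces the finiteness of $\int_X |s|^{2/n}$ to the standard log-canonical threshold calculation of $\int |z|^{2(k-1)/k}$ near a component of $\lfloor B\rfloor$ of coefficient one, which converges precisely because $(X,B)$ is dlt. For any $f\in \Bir(X,B)$, there is a common log resolution $\alpha,\beta\colon (Y,B_Y)\to (X,B)$ with $\alpha^*(K_X+B)=\beta^*(K_X+B)=K_Y+B_Y$; this equality is exactly what makes $|s|^{2/n}$ invariant under the change of variables, giving
\[
\int_X |s|^{2/n}=\int_X |f^*s|^{2/n}=|\chi(f)|^{2/n}\int_X |s|^{2/n},
\]
and hence $|\chi(f)|=1$.

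To upgrade unitarity to finite order I would spread the whole datum $(X,B,s)$ and (any given) $f$ over a finitely generated $\mathbb{Z}$-subalgebra of $\mathbb{C}$, so that $\chi(f)$ lies in a number field $K$ of bounded degree. Applying the integrability argument after Galois conjugation shows that every conjugate of $\chi(f)$ also has absolute value one, and Kronecker's theorem then forces $\chi(f)$ to be a root of unity. Since $\Bir(X,B)$ acts on the fixed finite-dimensional space $H^*(Y,\mathbb{Q})$ for a chosen log resolution $Y$ compatible with the canonical model of $(X,B)$, the eigenvalues coming from that action---which govern $\chi$---are roots of unity of uniformly bounded order. Hence $\chi(\Bir(X,B))$ is a finite subgroup of $\mathbb{C}^*$, as required.

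\textbf{Main obstacle.} The crux is the unitarity step: making rigorous the convergence and $f$-invariance of $\int_X |s|^{2/n}$ on a dlt pair, rather than merely a klt one. This is exactly where the evenness of $n$ is used---to globally define $|s|^{2/n}$ without sign ambiguity---and where the dlt (rather than lc) hypothesis enters, to avoid divergent contributions from non-dlt lc centers. Once unitarity and algebraicity are in place, the passage to a finite group is formal via Kronecker together with bounded-degree Galois theory.
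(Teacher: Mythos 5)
The paper does not actually prove this proposition; it is quoted from Fujino--Gongyo, where it is a substantial theorem. Your outline reproduces the classical Nakamura--Ueno/Deligne strategy (unitarity via integration of $|s|^{2/n}$, spreading out, Kronecker), and that strategy does handle the (sub-)klt case; the reduction of the disconnected case to a monomial-matrix picture is also fine. But the argument breaks down exactly at the step you call the crux. Near a boundary component of coefficient $b$ the local contribution to $\int_X|s|^{2/n}$ is $\int_{|z|<1}|z|^{-2b}$, which converges if and only if $b<1$: finiteness of this measure characterizes klt, not dlt. So whenever $\rddown{B}\neq 0$ --- precisely the situation the dlt hypothesis is meant to allow, and the one relevant for this paper, where the proposition is applied to the sdlt boundary of a dlt pair --- the measure has infinite mass, and the change-of-variables identity reads $\infty=|\chi(f)|^{2/n}\cdot\infty$, which gives no information. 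Your cited ``lct computation'' has the wrong sign in the exponent. This is exactly why the lc/dlt case in Fujino--Gongyo is not a short unitarity argument: they induct on dimension, restrict to $\rddown{B}$ with its sdlt structure via adjunction, use the already-established finiteness on the boundary, and only the klt part is treated by the integration argument (the evenness of $n$ is used in that inductive step, not to define $|s|^{2/n}$, which makes sense for any $n$ since $s\bar s\ge 0$).

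The final bounded-order step is also not rigorous as stated: $\Bir(X,B)$ does not act on $H^{*}(Y,\Q)$ for a fixed log resolution $Y$, since a B-birational self-map need not lift to a regular automorphism of any fixed model, and choosing a new common resolution for each $f$ destroys the homomorphism property. In the genuine proofs the uniform bound on the order of the eigenvalue comes from identifying it with an eigenvalue of an automorphism of an integral (Hodge-theoretic) object of bounded rank that is a B-birational invariant --- e.g. via the cyclic cover determined by $s$ or the lowest piece of the Hodge filtration --- so that the characteristic polynomial has integer coefficients of bounded degree; this identification is itself a nontrivial step that your sketch assumes. Note also that unitarity plus Kronecker for each individual $f$ only yields that each $\chi(f)$ is a root of unity; finiteness of the image group genuinely requires the uniform bound on the orders, so this last step cannot be omitted.
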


\newpage

\section{Proof of main theorems}
 \subsection{Proof of Theorem \ref{thm-lc-index-bounded}}
\begin{proof}[Proof of Theorem \ref{thm-lc-index-bounded}]
Since $(X,B)$ is lc but not klt, it suffices to assume that, after replacing $(X,B)$ with a $\Q$-factorial dlt model, $(X,B)$ is dlt and $\rddown{B}\neq 0$. Choose $\epsilon>0$ small rational number and run a MMP on $K_X+B-\epsilon \rddown{B}\sim_\Q -\epsilon \rddown{B}$. Since it is not pseudo-effective, we will end up with a Mori-Fiber space $f: X'\rightarrow V$ with $dim V< \dim X$ . Now replacing $X$ by $X'$, we can assume $X=X'$. Now we consider different cases of dimension of $V$.
	\begin{enumerate}
		\item $\dim V=0$: In this case, X is a Fano type since $K_X+B-\epsilon\rddown{B}$ is anti-ample globally and $(X,B-\epsilon \rddown{B})$ is klt by construction. By boundedness of complements applying on $K_X+B$, we see that there is a bounded n depending only on $d,\mathfrak{R}$ such that $n(K_X+B)\sim 0$.
		\item $\dim V= \dim X-1$: In this case, we see that general fiber of $f: X\rightarrow V$ is a rational curve. By canonical bundle formula and since $X$ is fano type over $V$, there exists $\mathfrak{S}\subset[0,1]$ finite and $q$ depending only on $d,\mathfrak{R}$ such that $$q(K_X+B)\sim qf^*(K_V+B_V+M_V)$$ such that $B_V\in \mathfrak{S}$ and $qM_V$ is b-Cartier. Note that since $\dim V = \dim X-1$, by \cite{PSh-II}, we can assume that, by replacing $q$ by a bounded multiple, we have $qM_V$ is b-effective-base-point-free. Hence by possibly replacing $\mathfrak{S}$, we can write  $$q(K_X+B)\sim qf^*(K_V+B_V)$$ where $B_V\in \mathfrak{S}$. Now applying Conjecture \ref{conj-bound-dlt-index} in lower dimension, we get there exists a bounded $n$ depending only on $\mathfrak{S}$, which depends only on $d,\mathfrak{R}$, such that $n(K_V+B_V)\sim 0$. Therefore $nq(K_X+B)\sim 0$, which proves the result.
		\item $\dim V >0$ and $\dim V \leq \dim X-2$: In this case, the general fiber has dimension $\geq 2$. Now since $\epsilon\rddown{B}$ is ample over $V$, there exists $S\in \rddown{B}$ that is horizontal over $V$. By divisorial adjunction, we can write $$K_S+B_S=(K_X+B)|_S$$. Note that here $S$ may not be normal, but $(S,B_S)$ is slc by Lemma \ref{lem-lt-slc-adjunction}.  Note that coefficients of $B_S$ lies in a finite set that depends only on $\mathfrak{R}$. Firstly, we argue that $S\rightarrow V$ is a contraction. \\\\
		By stein-factorization, we can write $S\xrightarrow{f'} V'\xrightarrow{g} V$, where $f'$ is contraction and $g$ is finite. Since $V$ is normal, it suffices to show degree of $g$ is one. Assume that degree of $g$ is $m>1$. Let $x\in V$ be a general close point and let $F$ be the general fiber above $x$ in $X\rightarrow V$. Say $x_1,x_2,\dots, x_m$ are pre-image of $x$ in $V'$ and let $G_i$ be the fiber of $x_i$ in $S$. Note that $G_i$ are all disconencted. Now $S|_F = \cup G_i$, note that $S|_F$ is a well-defined since $F$ is a general fiber and $S$ is horizontal over $V$. However, $S|_F$ is ample divisor on $F$ by construction. Since $\dim F\geq 2$, all ample divisors on $F$ need to be connected, which is a contradiction.\\\\
		Now we also have by abuse of notation, we can also call $f:S\rightarrow V$. Let $V_sm$ be the smooth locus of $V$. Then we can assume $q(K_V+B_V+M_V)|_{V_{sm}}$ is Cartier. hence we $q(K_X+B)|_{f^{-1}(V_{sm})}$ is Cartier by canonical bundle formula. Now restricting to $S$, (since $S$ is horizontal), we have $q(K_S+B_S)|_{f^{-1}(V_{sm})}$ is also Cartier and we have $$q(K_S+B_S)|_{(f^{-1}(V_{sm}))}\sim f^*(q(K_V+B_V+M_V)|_{V_{sm}})$$ By induction (and possibly replacing $q$ by a bounded multiple), we can assume that $q(K_S+B_S)\sim 0$. Hence there is a rational function $\alpha$ such that $q(K_S+B_S) = div(\alpha)$. We can assume that $div(\alpha)$ is vertical over $V$. Using that $V$ is normal and $f_*\mathcal{O}_S=\mathcal{O}_V$, we have $\alpha|_{f^{-1}(V_{sm})}=\beta_{sm}\circ f$ for some rational function $\beta_{sm}$ on $V_{sm}$. Hence we get $q(K_V+B_V+M_V)|_{V_{sm}}\sim 0$ via $beta$, and therefore $q(K_V+B_V+M_V)\sim 0$ (since $V$ is normal and $V\backslash V_{sm}$ has codimension at least 2).  Hence we have $q(K_X+B)\sim 0$.
	\end{enumerate}
\end{proof}

\newpage
\subsection{Proof of Theorem \ref{thm-klt-index-induction}}
Here we need to use the following Theorem from \cite{Roberto-16}.
\begin{thm}[\cite{Roberto-16}, Thm 3.2]\label{thm-Roberto-CY-tower} 
	Let $(X,B)$ be a klt Calabi-yau pair with $B>0$. Then there exists a birational contraction $\pi: X\dashrightarrow X'$ to a klt Calabi-Yau pair $(X',B' := \pi_* B)$, $B'> 0$ and a tower of morphisms $$X'=X_0\xrightarrow{p_0}X_1\xrightarrow{p_1}X_2\xrightarrow{p_2}\dots \xrightarrow{p_{k-1}}X_k$$  with $k\geq 1$ such that \begin{itemize}
		\item for any $1\leq i <k$ there exists a boundary $B_i\neq 0$ on $X_i$ and $(X_i,B_i)$ is a klt Calabi-Yau Pair.
		\item for any $0\leq i\leq k$, the morphism $p_i$ is a $K_{X_i}$ Mori fiber space, and 
		\item either $\dim X_k =0$, i.e. $X_k=pt$ or $\dim X_k>0$ and $K_{X_k}\sim_\Q 0$
	\end{itemize} 
\end{thm}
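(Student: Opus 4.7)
The plan is to build the tower inductively by running repeated Mori fibre space MMPs on the canonical class alone. Start with $(X,B)$ klt Calabi-Yau, $B>0$; pass to a small $\Q$-factorial modification and absorb this into the birational contraction $\pi\colon X\bir X'$, so we may assume $X_0:=X'$ is $\Q$-factorial with $K_{X_0}+B_0\sim_\Q 0$ and $B_0>0$. Because $K_{X_0}\sim_\Q -B_0$ is anti-effective and nonzero, $K_{X_0}$ is not pseudo-effective, and BCHM applied to the klt pair $(X_0,0)$ gives a $K_{X_0}$-MMP with scaling of a general ample divisor that terminates at a Mori fibre space. The crucial observation is that every $K_{X_0}$-negative extremal ray $R$ satisfies $(K_{X_0}+B_0)\cdot R=0$ because $K_{X_0}+B_0\sim_\Q 0$ globally, so each step is $K+B$-crepant; the klt Calabi-Yau condition is preserved along the MMP, and the boundary stays nonzero. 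Absorbing the birational part of this MMP into $\pi$, we may assume the resulting pair (still called $(X_0,B_0)$) already sits in a Mori fibre space $p_0\colon X_0\to X_1$. Moreover $B_0\sim_\Q -K_{X_0}$ is $p_0$-ample, so its restriction to a general fibre is effective and nonzero.

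Next I would equip $X_1$ with a klt Calabi-Yau structure $(X_1,B_1)$ with $B_1\ne 0$ and then iterate. Apply Theorem \ref{c-bdle-form} to $p_0\colon(X_0,B_0)\to X_1$: we obtain a discriminant $B_1^{\mathrm{disc}}$ and a b-nef moduli $M_1$ satisfying
\[
K_{X_1}+B_1^{\mathrm{disc}}+M_1\sim_\Q 0,
\]
so $(X_1,B_1^{\mathrm{disc}}+M_1)$ is a generalised klt Calabi-Yau pair. The goal is to replace $M_1$ by an effective $\Q$-divisor $M_1'\sim_\Q M_1$ with coefficients small enough that $B_1:=B_1^{\mathrm{disc}}+M_1'$ keeps $(X_1,B_1)$ klt; this should be possible because a general fibre of $p_0$ is a klt Fano pair, where a bounded complement from Theorem \ref{t-bnd-compl} can be transported through the fibration structure to give an effective representative of $M_1$. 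Since $B_0$ is $p_0$-ample and hence maps to a nonzero divisor on $X_1$ (after contributing to either $B_1^{\mathrm{disc}}$ or the discriminant of a horizontal component), the boundary $B_1$ is nonzero, and we can apply the whole argument again to $(X_1,B_1)$.

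Termination holds because each Mori fibre space strictly reduces the Picard rank, so after finitely many stages we reach some $X_k$. Either $\dim X_k=0$, or the attempted $K_{X_k}$-MMP fails to produce another Mori fibre space, which forces $K_{X_k}$ to be pseudo-effective; combined with $K_{X_k}+B_k\sim_\Q 0$ and $B_k\ge 0$, this forces $B_k\sim_\Q 0$ and hence $K_{X_k}\sim_\Q 0$, as required. The main obstacle is the middle paragraph: representing the moduli part $M_i$ at each stage by an effective $\Q$-divisor that preserves klt singularities, so that $(X_i,B_i)$ is an honest klt Calabi-Yau pair rather than merely a generalised one. This is an effective b-semi-ampleness statement for the canonical bundle formula along Fano-type fibrations and is the delicate ingredient; every other step is standard MMP bookkeeping together with the $K+B$-crepancy observation in the first paragraph.
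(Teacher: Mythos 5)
Note first that the paper does not prove this statement at all: it is quoted verbatim from \cite{Roberto-16}, Theorem 3.2, so your proposal has to be measured against the argument in that reference. Your overall skeleton (run a $K$-MMP, observe that every step is $(K+B)$-trivial so the klt Calabi--Yau structure descends crepantly, reach a Mori fibre space, push the CY structure to the base, iterate, and stop when the boundary dies, forcing $K_{X_k}\sim_\Q 0$) is indeed the right shape. But the step you yourself flag as ``the delicate ingredient'' is a genuine gap, and your sketch for it points at the wrong tool. You do not need, and cannot in general have, an effective $\Q$-divisor $M_1'\sim_\Q M_1$ representing the moduli b-divisor: effective b-semiampleness is an open conjecture, and ``transporting a bounded complement from a general fibre through the fibration'' via Theorem \ref{t-bnd-compl} does not produce such a representative (a complement on the fibre says nothing about a global section of a multiple of $M_1$ on the base). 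What the argument actually needs, and what \cite{Roberto-16} invokes, is Ambro's theorem on klt-trivial fibrations (\cite{Amb}; see also \cite{Fujino-Gongyo}): for the klt-trivial fibration $p_0\colon (X_0,B_0)\to X_1$ there exists a boundary $\Delta_1\ge 0$ with $(X_1,\Delta_1)$ klt and $K_{X_1}+\Delta_1\sim_\Q 0$, i.e.\ one trades the whole package $B_1^{\mathrm{disc}}+M_1$ for an honest klt boundary, with no semiampleness of $M_1$ required. Relatedly, your claim that $B_1$ is automatically nonzero because $B_0$ is $p_0$-ample is false: a horizontal boundary need not contribute to the discriminant, and $\Delta_1=0$ (equivalently $K_{X_1}\sim_\Q 0$) is exactly the terminal case of the tower; this is harmless only because your stopping dichotomy absorbs it, but the assertion as written is wrong.

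There is a second, unacknowledged gap in the iteration. The theorem requires a tower of \emph{morphisms} $X_0\to X_1\to\cdots\to X_k$ in which each $p_i$ is a Mori fibre space of $X_i$ itself; only the top variety may be modified, through the single birational contraction $\pi$. When you ``apply the whole argument again to $(X_1,B_1)$'' you run a $K_{X_1}$-MMP, which replaces $X_1$ by some birational model $X_1'$, and then $X_0\to X_1\dashrightarrow X_1'$ is no longer a morphism. To repair this you need an extra argument lifting the birational modification of the base to a birational contraction of the total space while preserving the Mori fibre space structure (using that $X_0$ is of Fano type over $X_1$, one runs a suitable relative MMP over $X_1'$ and checks the relative Picard rank), or you must organize the induction as in \cite{Roberto-16} so that all base modifications are pushed back into the single contraction $\pi$ at the top. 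As written, your ``standard MMP bookkeeping'' does not yield the tower of morphisms demanded by the statement, so both this step and the Ambro-type descent must be supplied before the proposal becomes a proof.
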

Now it follows easily from [\cite{B-18}, Theorem  1.4] that we have the following. 
\begin{prop}
	Let $d$ be a natural number and $\mathfrak{R}$ be a finite set of rationals. Let $\mathcal{F}$ be the set of $(X,B)$, dimension $d$ klt Calabi-Yau pair (i.e. $(X,B)$ klt with $K_X+B\sim_\Q 0$) with $B\in \mathfrak{R}$ such that the corresponding $X_k=pt$ as in above theorem. Let $\mathcal{F'}$ the corresponding set of pairs of the form $(X',B')$. Then $\mathcal{F'}$ forms a bounded family hence there is a $n$ depending only on $d,\mathfrak{R}$ such that $n(K_X+B)\sim 0$ for any $(X,B)\in \mathcal{F}$. \qed
\end{prop}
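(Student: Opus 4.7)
The approach is to combine Theorem \ref{thm-Roberto-CY-tower} with boundedness results for Fano-type log Calabi-Yau fibrations. The proof naturally splits into two parts: first showing that $\mathcal{F}'$ is log bounded, and second upgrading log boundedness to a uniform index, then transferring back to $\mathcal{F}$ via the birational contraction $\pi$.

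To show $\mathcal{F}'$ is log bounded, fix $(X,B) \in \mathcal{F}$ with associated birational contraction $\pi\colon X \dashrightarrow X'$ and tower $X' = X_0 \xrightarrow{p_0} X_1 \to \cdots \to X_k = pt$. Each $p_i$ is a $K_{X_i}$-Mori fiber space and each $(X_i,B_i)$ is klt log Calabi-Yau with $B_i \neq 0$. Since the tower terminates at a point, $X_{k-1}$ is Fano, and by induction on $i$ (using the Mori fiber space structure $X_i \to X_{i+1}$ together with $K_{X_i}+B_i \sim_\Q 0$ providing a klt complement) each $X_i$ is of Fano type; in particular $X'$ is of Fano type. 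The coefficients of $B'$ lie in $\mathfrak{R}$, so $B' \geq \delta$ for $\delta := \min(\mathfrak{R} \setminus \{0\}) > 0$. This is exactly the input required to invoke \cite{B-18}, Theorem 1.4 (compare Theorem \ref{thm-B-bounded-CY} applied along the tower), which delivers log boundedness of $\mathcal{F}'$.

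From log boundedness to uniform index: in a log bounded family of klt log Calabi-Yau pairs $(X', B')$ with $K_{X'}+B' \sim_\Q 0$, standard arguments give a uniform $n$ depending only on $d$ and $\mathfrak{R}$ such that $n(K_{X'}+B') \sim 0$ for every $(X',B') \in \mathcal{F}'$; this uses that the torsion order of a $\Q$-Cartier numerically trivial divisor is constructible and bounded along a finite-type base. To transfer this conclusion to $(X,B)$, take a common log resolution $W$ with $f\colon W\to X$ and $g\colon W\to X'$; because $\pi$ is a birational contraction between crepant klt Calabi-Yau pairs, we have $f^*(K_X+B) = K_W + B_W = g^*(K_{X'}+B')$. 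A rational function $\alpha$ on $W$ with $\divi(\alpha) = -n\, g^*(K_{X'}+B')$ then pushes forward under $f$ to give $\divi(\alpha) = -n(K_X+B)$ on $X$, so $n(K_X+B)\sim 0$.

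The main obstacle I anticipate is verifying the precise hypothesis of \cite{B-18}, Theorem 1.4 for $(X', B')$; in particular extracting, from membership in $\mathcal{F}$, an $\epsilon$-lc condition (with $\epsilon$ depending only on $d$ and $\mathfrak{R}$) together with the Fano type structure claimed above, so that Birkar's theorem genuinely applies. The finite coefficient set $\mathfrak{R}$, the klt hypothesis and the tower structure together should produce this input, but the bookkeeping is the one nontrivial part of the argument; once it is in place both the boundedness of $\mathcal{F}'$ and the index bound follow in a standard way.
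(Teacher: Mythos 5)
Your proposal follows essentially the same route as the paper: the paper's entire proof of this proposition is the citation of [\cite{B-18}, Theorem 1.4] (the "follows easily" sentence before the statement), with the steps you spell out -- log boundedness of $\mathcal{F}'$, boundedness of the torsion index in a bounded family of klt Calabi--Yau pairs with coefficients in $\mathfrak{R}$, and the crepant transfer along the birational contraction $\pi$ via a common resolution and the negativity lemma -- left implicit. Those three supplementary steps are correct as you state them, and your closing worry about matching the exact hypotheses of [\cite{B-18}, Theorem 1.4] (in particular a uniform $\epsilon$-lc condition, which a merely klt pair does not provide) is a genuine issue that the paper itself does not address.

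The one step in your write-up that does not hold as stated is the induction "each $X_i$ is of Fano type, in particular $X'$ is of Fano type." This implicitly uses the implication "$X_i$ Fano type over $X_{i+1}$ and $X_{i+1}$ Fano type $\Rightarrow$ $X_i$ Fano type," which is false in general: Fano type requires $-K_{X_i}$ to be big, and relative ampleness of $-K_{X_i}$ over a Fano type base together with the existence of a klt Calabi--Yau boundary $B_i\neq 0$ does not force global bigness of $-K_{X_i}\sim_\Q B_i$ (for instance, Mori fibre spaces of Calabi--Yau type over a rational base whose anticanonical divisor is effective but not big). Two further remarks make the point sharper: if $X'$ really were of Fano type, you would not need \cite{B-18} at all, since Theorem \ref{t-bnd-compl} applied to $K_{X'}+B'$ gives an $n$-complement $B'^+\ge B'$ with $n(K_{X'}+B'^+)\sim 0$, and $K_{X'}+B'\sim_\Q 0$ forces $B'^+=B'$, yielding the index bound directly; and the theorem of \cite{B-18} being invoked is designed precisely for the tower of Mori fibre spaces produced by Theorem \ref{thm-Roberto-CY-tower}, so the correct way to verify its hypotheses is through the tower structure (and the coefficient bound $B'\ge\delta$), not through a global Fano type claim. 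So you should delete the Fano type induction and instead feed the tower itself into Birkar's theorem, while acknowledging -- as you already do -- that the $\epsilon$-lc bookkeeping is the genuinely unverified point, both in your argument and in the paper's one-line proof.
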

Now we are ready to prove Theorem \ref{thm-klt-index-induction}.
\begin{proof}[Proof of Theorem \ref{thm-klt-index-induction}]
	By above proposition, it suffices to consider $(X,B)$ such that after applying Theorem \ref{thm-Roberto-CY-tower}, we end up with $\dim X_k >0$ and $K_{X_k}\sim_\Q 0$. Note that since we only care about the index, we can replace $X$ with $X'$. We will denote $p:X\rightarrow Z := X_k$ to be the composition of all the $p_i$. \\\\
	Firstly, we note that $p$ is a contraction. Let $(F,B_F:= B|_F)$ be the general fiber of $p$. Firstly we note that Restricting the morphism $p_i$ to $F$, we can deduce that $(F,B_F)$ belong to a bounded family depending only on $d,\mathfrak{R}$ using [\cite{B-18}, Theorem 1.4]. Hence there exists $r$ depending only on $d,\mathfrak{R}$ such that $r(K_F+B_F)\sim 0$. Hence we can apply canonical bundle formula and get $$r(K_X+B)\sim rf^*(K_Z+B_Z+M_Z)$$. Hence we can deduce that $K_Z+B_Z+M_Z\sim_\Q 0$. However $K_Z\sim_\Q 0 $. This implies that $B_Z =0 $ and $M_Z\equiv 0$. Now we can apply [\cite{floris}, Thm 1.3], to deduce that there exists $m$ depending only on $d,\mathfrak{R}$ such that $mK_Z\sim 0$. Writing $n:= mr$. Hence we deduce that $(K_X+B)\sim nf^*K_Z$. Now since $\dim Z<\dim X$, by hypothesis, we know that there is a bounded $q$ depending only on $d$, such that $qK_Z\sim 0$. Hence by replacing $n$ with $nq$, we can deduce that $n(K_X+B)\sim 0$ for some $n$ depending only on $d,\mathfrak{R}$.
\end{proof}

\newpage
Now we move on to prove some results we claimed in low dimension.
\subsection{Proof of Theorem \ref{thm-lc-index-dim-3-4}}
\begin{proof}[Proof of Theorem \ref{thm-lc-index-dim-3-4}]
	Firstly, it suffices to show that Conjecture \ref{conj-bound-dlt-index} hold in dimension 3.  Also by taking dlt modification, we can assume $(X,B)$ is dlt and $B>0$. Therefore there exists $\epsilon>0$ depending only on $\mathfrak{R}$ such that $B\geq \epsilon$. Now consider running an MMP on $K_X+B-\epsilon B$, which will terminate in a Mori Fiber space $X\rightarrow V$ (possibly replacing X). Now if $\dim V$ is zero, then $X$ is fano type and we are done by boundedness of complements. If $\dim V=2$, then by similar arguments as in the above proof, we are done. Hence the only remaining case is when $\dim V=1$. \\\\
	Firstly, by canonical bundle formula, we can write $$q(K_X+B)\sim qf^*(K_V+B_V+M_V)\sim_\Q 0$$ where $qM_V,qB_V$ is Cartier (since V is a smooth curve) and $q$ depend only on $\mathfrak{R}$ . Now if $V$ is rational curve, then we are done again by boundedness of complements as the canonical index of $K_V+B_V+M_V$ is bounded. Hence the remaining case is when $V$ is elliptic curve. In this case, $B_V=0$ and $qM_V$ must be torsion divisor. Although we don't know about the torsion index, we can see that $q(K_X+B)$ is indeed Cartier as it is pull back of Cartier divisor. Now if $(X,B)$ is lc but not klt, then we are done by Theorem \ref{thm-lc-index-bounded}. Hence we can assume $(X,B)$ is klt. Then since Cartier index of $(X,B)$ is bounded, we see that $(X,B)$ is $\delta$-lc for some $\delta$ depending only on $\mathfrak{R}$. Now we are ready to apply Theorem \ref{thm-B-bounded-CY} on log Calabi-Yau fibration, and see that $(X,B)$ is log bounded (since all elliptic curves are bounded). Therefore, we get that the canonical index is indeed bounded.

\end{proof}
\subsection{Proof of Theorem \ref{thm-index-4}}
\begin{proof}[Proof of Theorem \ref{thm-index-4}]
	Since $B\neq 0$, we can run mmp on $K_X$, replacing $X$, we will end with a Mori Fiber space, $f: X\rightarrow V$. where $X$ is fano type over $V$ and $-K_X$ is ample over $V$. If $\dim V$=3, then we are done by canonical bundle formula: there exists $q$ depending only on $\mathfrak{R}$, such that $q(K_X+B)\sim f^* (q(K_V+B_V))$ and  that $q(K_V+B_V)\sim 0$ by theorem \ref{thm-lc-index-dim-3-4}.\\\\ If $\dim V=0$, then $X$ is fano type and the result follows from Theorem \ref{t-bnd-compl}.\\\\ If $\dim V =1$, then either $V$ is rational curve or $V$ is elliptic curve. In both case, we can write $q(K_X+B)\sim qf^*(K_V+B_V+M_V)\sim_\Q 0$, where $qM_V,qB_V$ is Cartier (since V is a smooth curve) and $q$ depend only on $\mathfrak{R}$. If $V$ is rational curve, then again by Theorem \ref{t-bnd-compl}, possibly replacing $q$, we can assume that $q(K_V+B_V+M_V)\sim 0$ and hence we get $q(K_X+B)\sim 0$. If $V$ is elliptic curve, then $V$ is in a bounded family and again we have $q(K_V+B_V+M_V)$ is Cartier, hence $q(K_X+B)$ is Cartier, hence $(X,B)$ is $\epsilon$-lc for $\epsilon$ depending only on $\mathfrak{R}$. Therefore, we can apply \ref{thm-B-bounded-CY}, and conclude that $(X,B)$ belong to a bounded family and hence the result follows. Therefore, the only case left is when $\dim V=2$ and that is the next lemma.
\end{proof}
\begin{prop}
	Let $\mathfrak{R}$ be a finite subset of rational numbers. Then there exists $n$ depending only on $\mathfrak{R}$ satisfying the following: If $(X,B)$ be a dimension 4 klt pair such that $K_X+B\sim_\Q 0$. Assume there is a contraction projective morphism $f:X\rightarrow V$ such that $-K_X$ is ample over $V$ and $V$ is a a surface. Then $n(K_X+B)\sim 0$.
\end{prop}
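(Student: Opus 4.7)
The plan is to descend along $f$ via the canonical bundle formula to a two-dimensional generalized klt log Calabi--Yau pair on the base $V$, and then exploit that $V$ is a surface to bound the index of that pair.

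The generic fiber $(F, B|_F)$ is a two-dimensional klt log Calabi--Yau pair with coefficients in $\mathfrak{R}$; by Theorem \ref{thm-B-bounded-CY} the fibers form a log bounded family, and by the two-dimensional index conjecture there is $r = r(\mathfrak{R})$ with $r(K_F + B|_F) \sim 0$. Applying the canonical bundle formula of Theorem \ref{c-bdle-form}, together with ACC-type constraints on the discriminant coefficients and effective control of the moduli index coming from the bounded fiber, I obtain $q = q(\mathfrak{R})$, a finite set $\mathfrak{S} \subset [0,1] \cap \Q$, and a generalized klt pair $(V, B_V + M_V)$ such that $B_V \in \mathfrak{S}$, $qM$ is b-Cartier, $K_V + B_V + M_V \sim_\Q 0$, and
\[
q(K_X + B) \sim q f^*(K_V + B_V + M_V).
\]

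It remains to show $n(K_V + B_V + M_V) \sim 0$ for some $n = n(\mathfrak{R})$ on the surface $V$. Writing $-K_V \sim_\Q B_V + M_V$ shows $-K_V$ is nef. I would split into two cases. If $M_V$ is big, then $-(K_V + B_V)$ is big and nef with $(V, B_V)$ klt, so $V$ is of Fano type; Theorem \ref{t-bnd-compl} applied to the generalized pair $(V, B_V + M_V)$ produces a bounded $n$-complement $(V, B_V^+ + M_V)$ with $B_V^+ \ge B_V$, and the $\Q$-triviality of $K_V + B_V + M_V$ forces $B_V^+ = B_V$, giving $n(K_V + B_V + M_V) \sim 0$. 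If $M_V$ is not big then $M_V^2 = 0$ on the surface, so either $M_V \equiv 0$ and $(V, B_V)$ is a genuine klt log Calabi--Yau surface pair whose index is bounded by \cite{PSh-II}, while the residual torsion of the nef b-Cartier class $M_V$ is controlled by the bounded Cartier index of the bounded surface $V$; or $M_V$ is semi-ample and its Iitaka fibration $V \to W$ maps onto a curve, in which case I would apply the canonical bundle formula once more to $(V, B_V + M_V) \to W$ and invoke the effective b-semi-ampleness of the moduli part over a one-dimensional base from \cite{PSh-II}, exactly as in the $\dim V = 1$ step handled earlier in the proof of Theorem \ref{thm-index-4}.

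Pulling back along $f$ then yields $nq(K_X + B) \sim 0$ with $nq$ depending only on $\mathfrak{R}$. The main obstacle I expect is the subcase where $M_V$ is nef but not big on $V$: verifying the semi-ampleness of $M_V$ via log abundance for generalized surface pairs, and then propagating the torsion of the resulting moduli b-divisor through a second application of the canonical bundle formula on $V$, requires more care than in the lower-dimensional cases of Theorem \ref{thm-lc-index-dim-3-4} and the earlier parts of Theorem \ref{thm-index-4}.
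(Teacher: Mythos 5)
Your first half (canonical bundle formula down to a generalized klt log Calabi--Yau surface pair $(V,B_V+M_V)$ with bounded data, and the big case via Fano type plus Theorem \ref{t-bnd-compl}) matches the paper and is fine. The gaps are in the two non-big cases, which are exactly the hard ones. When $M_V\equiv 0$, your justification that ``the residual torsion of the nef b-Cartier class $M_V$ is controlled by the bounded Cartier index of the bounded surface $V$'' does not work: $V$ has not been shown to lie in a bounded family, and even a Cartier numerically trivial divisor on a fixed surface with positive irregularity need not be torsion, let alone torsion of bounded order. The correct tool, and the one the paper uses, is the effective statement of \cite{floris} (Thm 1.3), which applies precisely because the general fibers $(F,B_F)$ of $X\to V$ form a bounded family; it gives $mM_V\sim 0$ with $m$ depending only on $\mathfrak{R}$, after which the bounded index of the klt CY surface pair $(V,B_V)$ finishes this case. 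You invoke the bounded fiber only to get b-Cartierness of $qM$, which is weaker than what is needed here.

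In the case $M_V$ nef, $M_V^2=0$, $M_V\not\equiv 0$, two steps fail. First, semi-ampleness of $M_V$ is not available: a nef divisor with vanishing self-intersection on a surface need not be semi-ample, and b-semi-ampleness of moduli divisors over surface bases is open; the paper sidesteps this by running a $K_V$-MMP (possible since $K_V$ is not pseudo-effective) to produce the fibration $g:V\to W$ or the Fano-type case, with no abundance input. Second, and more seriously, your plan to bound the index downstairs by ``effective b-semi-ampleness over a one-dimensional base from \cite{PSh-II}'' does not close the argument: \cite{PSh-II} concerns lc-trivial fibrations of genuine pairs, not the generalized fibration $(V,B_V+M_V)\to W$, and when $W$ is an elliptic curve one is left needing a bounded torsion order for a numerically trivial $M_W$, which is the same problem one dimension down. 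The paper's essential move here is different: it only extracts that $qB_W$ and $qM_W$ are integral, hence Cartier on the smooth curve $W$, so $q(K_X+B)$ is Cartier and $(X,B)$ is $\epsilon$-lc with $\epsilon$ depending only on $\mathfrak{R}$; it then applies boundedness of Fano-type log Calabi--Yau fibrations over the bounded base $W$ (Theorem \ref{thm-B-bounded-CY}, i.e.\ \cite{B-18}) to conclude that $(X,B)$ itself is log bounded, and reads off the index from boundedness of the total space rather than from an index bound on the base. That trade of index-descent for boundedness of $(X,B)$ is the missing idea in your proposal, and without it the elliptic-base subcase remains open.
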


\begin{proof}
	Firstly let $F$ be the general fiber and let $K_F+B_F := (K_X+B)|_F$. We see that $-K_F$ is ample and $K_F+B_F\sim_\Q 0$. Furthermore, the coefficients of $B_F$ belong to a finite set and $(F,B_F)$ is klt. Hence by BAB, $(F,B_F)$ belong to some bounded family depending only on $\mathfrak{R}$. \\\\
	Now by canonical bundle formula and the fact that $X$ is fano type over $V$, we can find $q$ depending only on $\mathfrak{R}$ such that $$q(K_X+B)\sim qf^*(K_V+B_V+M_V)$$ where $qB_V$ and $qM_V$ are integral divisors and $K_V+B_V+M_V\sim_\Q 0$. We note that since $V$ is a surface, then $M_V$ is $\Q$-Cartier and $(V,B_V)$ is klt. \\\\ 
	Now we break into the following cases, \begin{enumerate}
		\item $M_V\equiv 0$: In this case, since $(F,B_F)$ belong to a log bounded family, by [\cite{floris}, Thm 1.3], there exists $m$ depending only on $\mathfrak{R}$ such that $mM_V\sim 0$, and hence, replacing $q$ by a bounded multiple, we have $qM_V\sim 0$ and $q(K_V+B_V)\sim 0$. Hence we get $q(K_X+B)\sim 0$, as needed.
		
		\item $M_V$ is not numerically trivial, then $K_V$ is not pseudo-effective, (we note that since $V$ is a surface, $M_V$ is $\Q$-Cartier). Hence we can run a MMP on $K_V$, which will terminate with a Mori-Fiber space $g:V\rightarrow W$ with $-(K_V+B_V)$ ample over $W$. Again, we have two cases, if $W$ is a point, then $V$ is Fano type and hence by Theorem \ref{t-bnd-compl}, by replacing $q$ by a bounded multiple, we can assume $q(K_V+B_V+M_V)\sim 0$, hence the result follows. Therefore, we can assume that $W$ is a curve. Now, let $G$ be the general fiber of $g:V\rightarrow W$, we see that $G$ is a rational curve and we can assume that $q(K_G+B_G+M_G)\sim 0$. Hence we have, by generalised canonical bundle formula, $q(K_V+B_V+M_V)\sim qg^*(K_W+B_W+M_V)$, and by replacing $q$, we can assume that $qB_W, qM_W$ is integral hence Cartier since $W$ is a smooth curve. This implies that $q(K_W+B_W+M_W)$ is Cartier, hence $q(K_V+B_V+M_V)$ is Cartier and hence $q(K_X+B)$ is Cartier. Hence there exists $\epsilon$ depending only on $\mathfrak{R}$ such that $(X,B)$ is $\epsilon$-lc. \\Now observe that $X\rightarrow W$ factors as a tower of Mori-Fiber space. And since W is either a rational curve or elliptic curve, it belongs to a bounded family. Hence we are ready to apply [\cite{B-18}, Theorem 1.4] to deduce that $(X,B)$ belong to a bounded family depending only on $\mathfrak{R}$. Hence in particular, we have $n(K_X+B)\sim 0$ for some $n$ depending only on $\mathfrak{R}$ as claimed.
	\end{enumerate}
\end{proof}

Finally we prove Theorem \ref{thm-1.9}.
\begin{proof}[Proof of Theorem \ref{thm-1.9}]
	This Theorem follows from \cite{Xu-2018}, Section 6.
\end{proof}
\newpage
\subsection{Proof of Main Result (Theorem \ref{thm-induction-index})}
\begin{proof}[Proof of Theorem \ref{thm-induction-index}]
	This follows from the inductive statements in Theorem \ref{thm-klt-index-induction}, \ref{thm-lc-index-bounded} and \ref{thm-1.9}.
\end{proof}
This gives immediately Theorem \ref{thm-lc-index-dim-3-4} and Theorem \ref{thm-index-4} as Corollary.
\begin{proof}[Proof of Theorem \ref{thm-lc-index-dim-3-4}]
	This follows directly from Theorem \ref{thm-induction-index} when $d=3$.
	
\end{proof}
\begin{proof}[Proof of Theorem \ref{thm-index-4}]
    This follows from Theorem \ref{thm-lc-index-dim-3-4} and Theorem \ref{thm-klt-index-induction}.
\end{proof}
\newpage

\newpage

\end{document}